\newcommand{\R}{\mathbb R}
\newcommand{\N}{\mathbb N}
\newcommand{\e}{\varepsilon}
\newcommand{\Aa}{{\mathcal A}}
\newcommand{\U}{\mathfrak{U}}  
\newcommand{\dist}{\mathop{\rm dist}}
\theoremstyle{plain}
\newtheorem{theorem}{Theorem}
\newtheorem{prop}{Proposition}
\newtheorem{lemma}{Lemma}
\newtheorem{cor}{Corollary}
\theoremstyle{definition}
\newtheorem{definition}[theorem]{Definition}
\theoremstyle{remark}
\newtheorem{remark}{Remark}
\newtheorem{example}{Example}
\def\PB{\operatorname{PB}}
\begin{document}

\title{The isomorphic Kottman constant of a Banach space}

\author[J.M.F.~Castillo]{Jes\'us M.~F.~Castillo}
\address{Instituto de Matem\'aticas de la Universidad de Extremadura (IMUEX),
Avda de Elvas s/n, 06011 Badajoz, Spain.}
\email{castillo@unex.es}

\author[M. Gonz\'alez]{Manuel Gonz\'alez}
\address{Departamento de Matem\'aticas, Universidad de Cantabria,
Avda de los Castros s/n, E-39071 Santander, Spain.}
\email{manuel.gonzalez@unican.es}

\author[T.~Kania]{Tomasz Kania}
\address{Institute of Mathematics, Czech Academy of Sciences, \v{Z}itn\'{a} 25, 115~67 Prague 1,
Czech Republic, and
Institute of Mathematics, Jagiellonian University, {\L}ojasiewicza 6, 30-348 Kra\-k\'{o}w, Poland}
\email{kania@math.cas.cz, tomasz.marcin.kania@gmail.com}

\author[P.~Papini]{Pier Luigi Papini}
\address{via Martucci 19, 40136 Bologna, Italia} \email{pierluigi.papini@unibo.it}

\subjclass[2010]{46B03, 46B08, 46B10}
\keywords{Kottman constant, Banach space, twisted sum, separated set}
\thanks{The research of the first-named author was supported in part by Project IB16056 de la Junta de Extremadura,
and the first- and second-named authors were supported in part by Project MTM2016-76958 (Spain).
The third-named author acknowledges with thanks funding received from GA\v{C}R project 19-07129Y; RVO 67985840.}

\begin{abstract}
We show that the Kottman constant $K(\cdot)$, together with its symmetric and finite variations, is continuous with respect 
to the Kadets metric, and they are log-convex, hence continuous, with respect to the interpolation parameter in a complex interpolation schema. 
Moreover, we show that $K(X)\cdot K(X^*)\geqslant 2$ for every infinite-dimensional Banach space $X$. 

We also consider the isomorphic Kottman constant (defined as the infimum of the Kottman constants taken over all renormings of the space) and solve the main problem left open in \cite{cgp}, namely that the isomorphic Kottman constant of a twisted-sum space is the maximum of the constants of the respective summands.
Consequently, the Kalton--Peck space may be renormed to have Kottman's constant arbitrarily close to $\sqrt{2}$. 
For other classical parameters, such as the Whitley and the James constants, we prove the continuity with
respect to the Kadets metric.
\end{abstract}

\maketitle

\section{Introduction}
We continue the study of the separation of sequences in the unit ball $B_X$ of an infinite-dimensional Banach space $X$, solving a few problems left open in \cite{cgp,castpapi,H-kania-R} concerning the Kottman
constant of $X$ and variations thereof.
We refer to the above-mentioned papers for the relevant background. Before we describe our main results, we gather some relevant definitions and facts.\smallskip

Unless otherwise specified, we tacitly assume that \emph{a space} is an infinite-dimensional Banach space.
The \emph{Kottman constant} of a space $X$, denoted $K(X)$, is defined as
$$
K(X) = \sup \{ \sigma >0\colon \exists (x_n)_{n=1}^\infty\text{ in } B_X \text{ such that }
\|x_n -x_m\|\geqslant \sigma \text{ for } n\neq m\}
$$
and is accompanied by its variations:
$$
K_s(X)=\sup \{ \sigma >0\colon \exists (x_n)_{n=1}^\infty\text{ in } B_X \text{ such that }
\|x_n \pm x_m\|\geqslant \sigma \text{ for } n\neq m\},
$$
$$
K_f(X) = \sup \{ \sigma >0\colon \forall N\in \mathbb N\; \exists  (x_n)_{n=1}^N\text{ in }B_X
 \text{ such that } \|x_n - x_m\|\geqslant \sigma \text{ for } n\neq m\},
$$
called, respectively, the \emph{symmetric} and \emph{finite} Kottman constants.\medskip

Next we list some relevant facts concerning these constants:

\begin{itemize}
\item  \cite{castpapi,eltod} For a countably incomplete ultrafilter $\U$ (in particular, for any non-principal ultrafilter on a countable set) and a space $X$, we have
$$
1<K(X)\leqslant K_f(X)=K(X_\U)\leqslant 2,
$$
where $X_\U$ stands for the ultrapower of $X$ with respect to $\U$.
\item \cite[Proposition 5.1]{H-kania-R}, \cite{kott,vandulst} Every space $X$ may be renormed so that
$$
K_s(X)=2 \textrm{ or } K(X)=K(X^*)=2.
$$
\item \cite{cgp} There exists a space $Z$ for which $K(Z)<K(Z^{**})$, and it is easy to check that this space
also satisfies $K_s(Z)<K_s(Z^{**})$. The said space is a $J$-sum of $\ell_1^n$ ($n\in \mathbb N$) in the sense of Bellenot (\cite{bellenot}); it has the property that $K(Z)<2$, yet $Z^{**}$ admits a quotient map onto $\ell_1$ so that $K_s(Z^{**})=2$.
\end{itemize}
The fact that $K(X)>1$ is known as the Elton--Odell theorem \cite{eltod}.
Kottman had previously shown \cite{kott2} that $K(X)>1^+$, meaning that there is a sequence $(x_n)_{n=1}^\infty$
in $B_X$ such that $\|x_n -x_m\|>1$ for distinct natural numbers $n,m$.
In \cite{H-kania-R} it was proved that $K_s(X)>1^+$ and $K_s(X)>1$ for every separable dual space $X$, and
recently Russo proved that $K_s(X)>1$ for every $X$ \cite{russo}.\medskip

In this paper, among other things, we study the interrelation between the Kottman constants with interpolation spaces and twisted sums of Banach
spaces, proving the following facts:
\begin{enumerate}
\item The inequality $2\leqslant K(X)\cdot K(X^*)$ is valid for any space $X$.

\item The above-listed Kottman constants are continuous with respect to the Kadets metric, which
implies their continuity with respect to the interpolation parameter.
Moreover, under some additional conditions, the following interpolation inequality is established:
$$K(X_\theta)\leqslant K(X_0)^{1-\theta}\cdot K(X_1)^\theta.$$

\item The isomorphic Kottman constant $\tilde K(X) = \inf \{K(\tilde X)\colon \tilde{X}\cong X\}$ that was
introduced in \cite{cgp} to treat some natural situations in which no specific norm of a space is known, is computed for twisted sums in terms of the isomorphic constants of the summands. More specifically, for a twisted sum $X$ expressed in terms of the short exact sequence $0 \to Y \to X\to  Z\to 0$, the formula 
$$\tilde K(X)= \max \{\tilde K(Y), \tilde K(Z)\}$$
is established, which solves a problem posed in \cite{cgp}. In particular, if $X$ is a twisted Hilbert space, namely a space that can be represented as a twisted sum of two Hilbert spaces, then $\tilde K(X)=\sqrt{2}$.\smallskip

\item For the \emph{disjoint Kottman constant} $K^\perp$ of K\"othe spaces, that we introduce here, we prove
some results, including a general interpolation formula:
$$K^\perp(X_\theta)\leqslant K^\perp(X_0)^{1-\theta}\cdot K^\perp(X_1)^\theta.$$
\end{enumerate}

The results presented above are also valid for both the symmetric and finite Kottman constants as well as for their isomorphic variations.\smallskip

The final section of the paper is devoted to linking and extending this study to other well-known  parameters such as the Whitley thickness constant \cite{castpapiH} and the James constant \cite{castpapi}; 
a~number of applications to the geometry of Banach spaces is presented.

\section{Estimates for the Kottman constant, continuity, and interpolation}
\subsection{A relation between the constants of a space and its dual}

Our first lemma is apparently a folklore result, however we have been unable to identify a proper reference in the literature, so we include a proof for the sake of completeness.

Let $A$ be an infinite subset of $\N$ for which we set
$[A]_2=\{(n_1,n_2)\in A\times A : n_1<n_2\}$.
Ramsey's theorem \cite[Theorem 1.1]{odell} asserts that given $\Aa\subset [\N]_2$, there exists an infinite subset $B$ of $\N$ such that either $[B]_2\subset \Aa$ or $[B]_2\subset [\N]_2\setminus\Aa$.

\begin{lemma}\label{ramsey}
Let $(x_n)$ be a bounded sequence in a Banach space.
Then there exists an infinite subset $M$ of $\N$ such that the sequence $\|x_i-x_j\|$ converges as $i,j\in M$, $i,j\to\infty$.
\end{lemma}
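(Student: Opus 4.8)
The plan is to extract a convergent subsequence of the doubly-indexed quantity $\|x_i-x_j\|$ by a diagonal-type argument built on repeated applications of Ramsey's theorem, exactly in the spirit of the standard proof that every bounded sequence of reals has a convergent subsequence, but carried out for colourings of pairs rather than singletons. Since $(x_n)$ is bounded, say $\|x_n\|\leqslant R$ for all $n$, the numbers $\|x_i-x_j\|$ all lie in the compact interval $[0,2R]$.

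First I would fix a decreasing sequence of positive reals $\e_k\to 0$ (e.g. $\e_k=2^{-k}$) and cover $[0,2R]$ by finitely many intervals of length $\e_k$. For the first level, this partition induces a finite colouring of $[\N]_2$, where the colour of $(i,j)$ records which interval contains $\|x_i-x_j\|$; applying Ramsey's theorem (iterated finitely many times to handle a finite, rather than two-element, colour set, or once after lumping colours in pairs) yields an infinite $M_1\subseteq\N$ on which all values $\|x_i-x_j\|$ for $(i,j)\in[M_1]_2$ lie in a single interval of length $\e_1$. Repeating inside $M_1$ with the finer partition of mesh $\e_2$ gives an infinite $M_2\subseteq M_1$ on which the values lie in an interval of length $\e_2$, and so on, producing a decreasing chain $\N\supseteq M_1\supseteq M_2\supseteq\cdots$ of infinite sets.

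Next I would take a diagonal set $M$: choose $m_1\in M_1$, then $m_2\in M_2$ with $m_2>m_1$, then $m_3\in M_3$ with $m_3>m_2$, and so on, and set $M=\{m_1,m_2,\dots\}$. For any $k$, the tail $\{m_k,m_{k+1},\dots\}$ is contained in $M_k$, so whenever $i,j\in M$ with $i,j\geqslant m_k$ the value $\|x_i-x_j\|$ lies in a common interval of length $\e_k$; hence $\bigl|\,\|x_i-x_j\|-\|x_{i'}-x_{j'}\|\,\bigr|\leqslant\e_k$ for all such $i,j,i',j'$. This is precisely the Cauchy condition for the net $(i,j)\mapsto\|x_i-x_j\|$ indexed by $i,j\in M$, $i,j\to\infty$, and since $[0,2R]$ is complete the net converges to some limit.

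The only mild subtlety — and the step I would flag as the main technical point — is that Ramsey's theorem as quoted handles only two colours, whereas each mesh partition uses finitely many intervals; this is handled routinely by induction on the number of colours (split the colour set into two blocks, apply the two-colour version, and recurse on the block that survives), so it presents no real obstacle. Everything else is bookkeeping: the boundedness gives compactness, the iterated Ramsey argument gives the nested sets, and the diagonalisation converts "arbitrarily fine on a tail" into genuine convergence.
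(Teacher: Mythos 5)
Your proof is correct and follows essentially the same route as the paper: nested applications of Ramsey's theorem to produce a decreasing chain of infinite sets on which the distances are confined to ever-smaller intervals, followed by a diagonal choice $m_k\in M_k$ to get the Cauchy (hence convergence) property. The only cosmetic difference is that you partition $[0,2R]$ into finitely many intervals of mesh $\e_k$ (invoking the finite-colour Ramsey theorem by induction), whereas the paper simply bisects the range at each stage so that the two-colour version suffices.
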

\begin{proof}
We may suppose that $\{\|x_i-x_j\|\colon i,j\in \N, i<j\}$ is contained in an interval $[a,b]$.
Let $c=(a+b)/2$ be the midpoint and let $\Aa=\{(n_i,n_j)\in [\N]_2: \|x_{n_i}-x_{n_j}\|\in [a,c]\}$. By Ramsey's theorem there exists an infinite subset $M_1$ of $\N$ such that
$\{\|x_i-x_j\|\colon (i,j)\in [M_1]_2\}$ is contained in $[a,c]$ or in $(c,b]$.\smallskip

Repeating the process, we obtain a decreasing sequence $M_1\supset M_2\supset\cdots$ of
infinite subsets of $\N$ such that the set $\{\|x_i-x_j\| : (i,j)\in [M_k]_2\}$ has
diameter at most $(b-a)/2^k$. Then the set $M=\{m_1<m_2<\cdots\}\subset\N$ with $m_k\in M_k$ meets the requirements and witnesses the convergence of $(\|x_i-x_j\|)_{i,j\in M}$ as $i,j\to\infty$.
\end{proof}

\begin{prop}
For every infinite-dimensional Banach space $X$ we have $2\leqslant K(X)\cdot K(X^*)$.
\end{prop}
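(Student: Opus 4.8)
The plan is to use a biorthogonal-type argument. The key idea is that $K(X) \cdot K(X^*) \geqslant 2$ should follow from the following duality principle: if $(x_n)$ is a $\sigma$-separated sequence in $B_X$, then after passing to differences and using Hahn--Banach, one can produce a sequence of functionals in $B_{X^*}$ that is $(2/\sigma)$-separated, or vice versa.

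First I would take an arbitrary $\sigma < K(X)$ and fix a sequence $(x_n)_{n=1}^\infty$ in $B_X$ with $\|x_n - x_m\| \geqslant \sigma$ for $n \neq m$. For each pair $n < m$ choose, by Hahn--Banach, a functional $f_{n,m} \in B_{X^*}$ with $f_{n,m}(x_n - x_m) = \|x_n - x_m\| \geqslant \sigma$. The trick is to extract from this double-indexed family a genuine sequence of functionals whose mutual distances I can control. A natural choice is to work along the diagonal: set $g_n = f_{n, n+1}$, or better, use a Ramsey/diagonal extraction. The point is that $f_{n,m}(x_n - x_m) \geqslant \sigma$ forces $f_{n,m}(x_n) - f_{n,m}(x_m) \geqslant \sigma$, so at least one of $f_{n,m}(x_n)$, $-f_{n,m}(x_m)$ is $\geqslant \sigma/2$ in absolute value. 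Then to estimate $\|g_n - g_m\|$ I would evaluate at a suitable $x_k$: since $g_m(x_m)$ is large (close to the value picked out) while $g_n$ applied to the "wrong" vector need not be, I expect $\|g_n - g_m\|_{X^*} \geqslant (g_n - g_m)(x_k)/\|x_k\|$ to be bounded below by roughly $\sigma/(\text{something})$, and combined with the trivial bound $K(X^*) \leqslant 2$ this should close the inequality $K(X)\cdot K(X^*) \geqslant 2$.

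More precisely, I would aim to show: given $\sigma < K(X)$, one can find a sequence $(g_n)$ in $B_{X^*}$ that is $(2/K(X^*))$-separated... no — rather, I would directly show $K(X^*) \geqslant 2/\sigma$ is too strong in general, so the real statement must be asymptotic. This is where Lemma~\ref{ramsey} enters: I would pass to an infinite set $M$ on which $\|x_i - x_j\|$ converges, say to $\rho \geqslant \sigma$, and simultaneously (by a second application, or by extracting weak$^*$-convergence of a bounded sequence of functionals plus Ramsey) arrange that the functionals $g_n = f_{n,m}$ behave coherently. On such a set, for $i < j$ in $M$, writing $g = f_{i,j}$, I expect $g(x_i) \to \alpha$ and $g(x_j) \to \beta$ with $\alpha - \beta = \rho$, hence $\max(|\alpha|, |\beta|) \geqslant \rho/2$. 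Evaluating differences $g_n - g_m$ at the $x_k$ then yields a lower bound on $K_f(X^*)$, and since $K_f(X^*) = K(X^{*}_{\U}) = K_f(X^{**}) \geqslant \cdots$ — but actually one only needs $K_f(X^*) \leqslant K(X^*)$ is false; rather $K(X^*) \leqslant K_f(X^*)$, so a lower bound on $K_f$ does not suffice. Hence I would instead build the separated sequence in $X^*$ directly as an infinite sequence, not merely finite, using weak$^*$-compactness to stabilize.

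**The main obstacle** I anticipate is exactly this passage from a finitely-separated family of functionals to an honestly infinite $\sigma'$-separated sequence in $B_{X^*}$, since the natural Hahn--Banach functionals $f_{n,m}$ form a double array rather than a sequence, and naive diagonal choices may lose the separation. The resolution should be a careful combination of Lemma~\ref{ramsey} (to stabilize $\|x_i - x_j\|$) with a diagonal weak$^*$-limit argument: fix a weak$^*$-cluster point strategy so that $g_n = f_{n, \phi(n)}$ for a fast-growing $\phi$, arranged so that $g_n(x_m)$ converges in $n$ for each fixed $m$ and in $m$; then $(g_n - g_m)$ evaluated at the "late" vectors $x_k$ with $k$ large gives the lower bound $\|g_n - g_m\| \gtrsim \rho/2 \geqslant \sigma/2$, whence $K(X^*) \geqslant \sigma/2$ — but that gives only $K(X)\cdot K(X^*) \geqslant \sigma^2/2$, which as $\sigma \to K(X) \leqslant 2$ is not enough. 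So the quantitative heart must be sharper: I expect the correct bound is $\|g_n - g_m\| \geqslant 2/\rho \cdot (\text{correction})$ — one should normalize differently, perhaps taking the functionals supporting $(x_n - x_m)/\|x_n - x_m\|$ and playing off $f(x_n - x_m) = 1$ against the triangle inequality to get separation of order $2 - \|x_n+x_m\|$ or similar, ultimately yielding $K(X^*) \geqslant 2/K(X)$ in the limit. Nailing down this exact constant is the crux of the proof.
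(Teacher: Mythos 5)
There is a genuine gap, and you in fact point at it yourself: the entire argument hinges on upgrading the Hahn--Banach data $f_{n,m}(x_n-x_m)=\|x_n-x_m\|\geqslant\sigma$ to a separation of order $2/\sigma$ among the functionals, and nothing in your sketch delivers that. The only estimates your construction actually supports are of the form $\|g_n-g_m\|\gtrsim\sigma/2$ (obtained by evaluating differences at suitable $x_k$), which yields $K(X)\cdot K(X^*)\geqslant\sigma^2/2$ --- strictly weaker than the claim whenever $K(X)<2$. The reason you cannot do better from this starting point is that the Hahn--Banach functionals for the differences carry no information about their values at the \emph{other} points of the sequence: $f_{n,m}(x_k)$ for $k\neq n,m$ is completely uncontrolled, and Ramsey/weak$^*$ stabilization only makes these unknown values coherent, it does not make them favorable. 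Your closing sentence (``nailing down this exact constant is the crux'') concedes exactly the step that is missing, so the proposal does not constitute a proof.

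The missing idea in the paper is to \emph{not} start from an almost optimally separated sequence at all. By a theorem of Day, the unit sphere of $X$ contains a basic sequence $(x_n)$ whose coordinate functionals have norm one; extending these to norm-one functionals $(x_n^*)$ on $X$ gives a biorthogonal system with both sequences in the respective unit balls. Biorthogonality pins down all the relevant values exactly: $\langle x_i^*-x_j^*, x_i-x_j\rangle = 2$, hence $2\leqslant \|x_i^*-x_j^*\|\cdot\|x_i-x_j\|$ for all $i\neq j$. A single application of Lemma~\ref{ramsey} (to each of the two sequences of mutual distances) produces a subsequence along which $\|x_i-x_j\|\to k\leqslant K(X)$ and $\|x_i^*-x_j^*\|\to k^*\leqslant K(X^*)$, and the pairwise inequality passes to the limit to give $2\leqslant k^*k\leqslant K(X^*)K(X)$. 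Note that this also dissolves your ``double array'' extraction problem: the dual vectors arrive as a genuine sequence from the outset, rather than as norming functionals indexed by pairs. If you want to salvage your approach, you would need a substitute for biorthogonality that controls $f_{n,m}(x_k)$ at all indices simultaneously --- which is essentially what Day's theorem provides.
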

\begin{proof}
Day proved in \cite{day} that the unit sphere of $X$ contains a basic sequence $(x_n)_{n=1}^\infty$ whose coordinate functionals have norm one. Let $(x^*_n)_{n=1}^\infty$ denote the sequence of arbitrary norm-one extensions of the coordinate functionals associated to $(x_n)_{n=1}^\infty$.
By biorthogonality of $(x_n, x_n^*)_{n=1}^\infty$, we have
$$
2=\langle x^*_i-x^*_j, x_i-x_j\rangle \leqslant \|x^*_i-x^*_j\|\cdot\|x_i-x_j\|.
$$
Passing to a subsequence if necessary, we may assume that both $\|x^*_i-x^*_j\|$ and $\|x_i-x_j\|$ converge
in the sense of Lemma \ref{ramsey} to $k^*$ and to $k$, respectively.
Then $2\leqslant k^*\cdot k\leqslant K(X^*) \cdot K(X)$.
\end{proof}

\subsection{Continuity of the Kottman constant and interpolation inequalities}

The Kottman constant is readily continuous with respect to the Banach--Mazur distance \cite{kott},
with a simple estimate $K(X)\leqslant K(Y)\cdot d_{\rm BM}(X,Y)^2$.
In particular, two Banach spaces with the Banach--Mazur distance equal to 1 have the same Kottman constant.
We are however interested in continuity with respect to the so-called Kadets distance.\smallskip

Let $M, N$ be closed subspaces of a Banach space $Z$.
The \emph{gap $g(M,N)$ between $M$ and $N$} is defined as
$$
g(M,N) = \max \big\{\sup_{x\in B_M}\dist(x,B_N), \sup_{y\in B_N}\dist(y,B_M)\big\},
$$
where $\dist(x,B_N)=\inf\{\|x-n\|\colon n\in B_N\}$. The \emph{Kadets distance} $d_{\rm K}$ between two Banach spaces $X,Y$
is defined as the infimum of $g(iX, jY)$, where $i\colon X\to W$, $j\colon Y\to W$ range through isometric embeddings into the same Banach space $W$.
We are ready to present the following elementary result concerning continuity of the Kottman constant with respect to $d_K$.

\begin{theorem}\label{kottcontka}
The Kottman constant is continuous with respect to the Kadets metric. More precisely,
$$|K(X) - K(Y)|\leqslant 2 \cdot d_{\rm K}(X, Y).$$
The same is true for both symmetric and finite Kottman constants.
\end{theorem}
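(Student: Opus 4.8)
The plan is to unravel the definition of the Kadets distance and reduce the claim to a single triangle‑inequality estimate carried out in a common superspace. First I would fix $\varepsilon>0$ and, by definition of $d_{\rm K}$, choose isometric embeddings $i\colon X\to W$ and $j\colon Y\to W$ into some Banach space $W$ with $g(iX,jY)<d_{\rm K}(X,Y)+\varepsilon$; write $g$ for this gap and identify $X$ and $Y$ with their (isometric) images in $W$, so that all norms in sight are the norm of $W$. Then I would fix $\sigma\in(0,K(X))$ together with a sequence $(x_n)_{n=1}^\infty$ in $B_X$ satisfying $\|x_n-x_m\|\geqslant\sigma$ for $n\neq m$.

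The key step will be to perturb $(x_n)$ into $B_Y$. Since $\dist(x_n,B_Y)\leqslant\sup_{x\in B_X}\dist(x,B_Y)\leqslant g$, for each $n$ there is $y_n\in B_Y$ with $\|x_n-y_n\|<g+\varepsilon$, and then for $n\neq m$
$$\|y_n-y_m\|\geqslant\|x_n-x_m\|-\|x_n-y_n\|-\|x_m-y_m\|\geqslant\sigma-2g-2\varepsilon.$$
Thus $(y_n)_{n=1}^\infty$ is a $(\sigma-2g-2\varepsilon)$‑separated sequence in $B_Y$, which gives $K(Y)\geqslant\sigma-2g-2\varepsilon$ when the right‑hand side is positive, and trivially otherwise. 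Letting $\varepsilon\to0$, $g\to d_{\rm K}(X,Y)$ and $\sigma\to K(X)$ yields $K(Y)\geqslant K(X)-2\,d_{\rm K}(X,Y)$; swapping the roles of $X$ and $Y$ gives $|K(X)-K(Y)|\leqslant2\,d_{\rm K}(X,Y)$, and continuity follows since $K(\cdot)$ is then $2$‑Lipschitz for $d_{\rm K}$.

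For the symmetric constant I would run the identical argument with the same vectors $y_n$, using $\|(x_n\pm x_m)-(y_n\pm y_m)\|\leqslant\|x_n-y_n\|+\|x_m-y_m\|<2g+2\varepsilon$ to deduce $\|y_n\pm y_m\|\geqslant\sigma-2g-2\varepsilon$ from $\|x_n\pm x_m\|\geqslant\sigma$; for the finite constant I would apply the same perturbation to an arbitrary $N$‑element $\sigma$‑separated family in $B_X$, producing an $N$‑element $(\sigma-2g-2\varepsilon)$‑separated family in $B_Y$. I do not expect a genuine obstacle here: the entire argument is the triangle inequality, and the only subtlety is that the infimum defining $\dist(\cdot,B_Y)$ need not be attained, which is precisely why the harmless $\varepsilon$‑slack is built in; one should also remember to dispatch the degenerate case $\sigma-2g-2\varepsilon\leqslant0$ separately, as indicated above.
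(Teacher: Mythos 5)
Your argument is correct and is essentially the paper's own proof: perturb a near-optimal separated sequence from $B_X$ into $B_Y$ inside a common superspace, lose $2g$ by the triangle inequality, and pass to the infimum over embeddings (the paper isolates this as a lemma on the gap between subspaces and sweeps the non-attainment issue under ``assume for simplicity,'' which your $\varepsilon$-slack handles more carefully). The treatment of the symmetric and finite constants is likewise the same routine adaptation the paper invokes.
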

\begin{proof}
Certainly, for isometric embeddings $i,j$, we have $K(X)=K(iX)$ and $K(Y)=K(jY)$. This together with Lemma \ref{contg} below yield
$|K(iX) - K(jY)|\leqslant 2 g(iX, jY)$ and, consequently, $|K(iX) - K(jY)|\leqslant 2 d_{\rm K}(X, Y)$.
It is clear that the result is also valid for $K_s(\cdot)$ and $K_f(\cdot)$.
\end{proof}

\begin{lemma}\label{contg}
Let $M,N$ be subspaces of a Banach space $Z$. Then $|K(M) - K(N)|\leqslant 2\cdot g(M,N)$.
\end{lemma}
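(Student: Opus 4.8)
The plan is to show that each of the two quantities $K(M)$ and $K(N)$ is bounded above by a function of the other that degrades at rate $2g(M,N)$; by symmetry of the gap this yields the claimed two-sided estimate. Fix $\e>0$ and let $g=g(M,N)$. First I would take a sequence $(x_n)_{n=1}^\infty$ in $B_M$ with $\|x_n-x_m\|\geqslant K(M)-\e$ for $n\neq m$, witnessing (almost) the Kottman constant of $M$. By the definition of the gap, for each $n$ there is $y_n\in B_N$ with $\|x_n-y_n\|\leqslant g+\e$. Then for $n\neq m$ the triangle inequality gives
$$
\|y_n-y_m\|\geqslant \|x_n-x_m\|-\|x_n-y_n\|-\|x_m-y_m\|\geqslant K(M)-\e-2(g+\e).
$$
Since $(y_n)\subset B_N$ is admissible in the definition of $K(N)$, we obtain $K(N)\geqslant K(M)-2g-3\e$, and letting $\e\to0$ gives $K(M)-K(N)\leqslant 2g(M,N)$. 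Interchanging the roles of $M$ and $N$ (the gap being symmetric by definition) yields $K(N)-K(M)\leqslant 2g(M,N)$, hence $|K(M)-K(N)|\leqslant 2g(M,N)$.

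The argument is entirely elementary, so there is no serious obstacle; the only points requiring a little care are bookkeeping ones. One must make sure the approximating points $y_n$ can be chosen in the \emph{ball} $B_N$ and not merely in $N$ — but this is exactly what the gap $g(M,N)$ controls, since it is defined via $\dist(x,B_N)$ for $x\in B_M$, so the perturbed sequence automatically stays in $B_N$. One also checks that the infimum defining $\dist(x,B_N)$ is approached, not necessarily attained, which is why the extra $\e$ is carried along and only removed at the end. Finally, for the symmetric constant $K_s$ one repeats the same computation with $\|y_n\pm y_m\|$ in place of $\|y_n-y_m\|$, and for the finite constant $K_f$ one performs it with finite families $(x_n)_{n=1}^N$ for each fixed $N$; in both cases the estimate is identical, which is consistent with the assertion in Theorem \ref{kottcontka} that the result holds for $K_s(\cdot)$ and $K_f(\cdot)$ as well.
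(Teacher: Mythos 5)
Your proof is correct and follows essentially the same route as the paper's: approximate an almost optimal separated sequence in $B_M$ by points of $B_N$ within the gap, apply the triangle inequality, and symmetrize. The only difference is that you carry the $\e$'s explicitly where the paper simply assumes for simplicity that $K(M)$ and the distances are attained, so your write-up is, if anything, slightly more careful.
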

\begin{proof}
We will present the proof only for $K$ as for $K_s$ it will be entirely analogous.\smallskip

Let us assume for the sake of simplicity that $K(M)$ is attained. So we may find a sequence $(a_n)_{n=1}^\infty$ in $B_M$ such that $K(M) = \|a_n - a_m\|$.
For each $a_n$ ($n\in \N$) we pick some $b_n$ in $B_N$ so that $\|a_n - b_n\|\leqslant g(M,N)$. Then
$$\|b_n - b_m\| \geqslant  K(M) - 2\cdot g(M,N).$$

Consequently, $K(N)\geqslant  K(M) - 2\cdot g(M,N)$, hence $K(M) - K(N)\leqslant 2\cdot g(M,N)$, and exchanging the
r\^{o}les of $M$ and $N$ one finally gets $\left |K(N) - K(M)\right|\leqslant 2 \cdot g(M,N)$.
\end{proof}

\subsection{Complex interpolation and separation}
We refer the reader to \cite{belo} for all necessary information on complex interpolation theory for
Banach spaces.\smallskip

Let $(X_0, X_1)$ be an interpolation couple, let $\mathbb S=\{z\in \mathbb C\colon 0< {\rm Re}\, z < 1\}$ be the complex unit strip, and let $\mathscr C= \mathscr{C}(X_0, X_1)$ be the \emph{Calderon space} formed by those
bounded continuous functions $F\colon\overline{\mathbb{S}} \to X_0+X_1$ which are analytic on $\mathbb{S}$, satisfy
the boundary conditions $F(k+ti)\in X_k$ for $k=0,1$, and the norm
$\|F\|_{\mathscr C}=\sup\{\|F(k+ti)\|_{X_k}: t\in\R,\; k=0,1\}$ is finite.\smallskip

For each $\theta\in \mathbb S$ we may consider the evaluation functional $\delta_\theta\colon \mathscr C\to X_0+X_1$, which is defined by
$\delta_\theta(f)=f(\theta)$.
The interpolation spaces are quotient spaces $X_\theta = (X_0,X_1)_\theta = \mathscr C / \ker\delta_\theta$ endowed with their
natural quotient norm. Kalton and Ostrovskii \cite{kaltostr} proved that the Kadets metric is continuous with
respect to the interpolation parameter, by showing that
$$
d_{\rm K}(X_t, X_s)\leqslant 2 \left| \frac{\sin \left(\pi(t-s)/2\right)}{\sin \left(\pi(t+s)/2\right)}\right|,\quad 0<s,t<1.
$$

Thus, by combining the continuity of Kottmant's constant with respect to the Kadets distance together with the
continuity of the Kadets metric with respect to the interpolation parameter yields the following corollary.

\begin{cor}\label{cor:cont}
Let $(X_0, X_1)$ be an interpolation couple. Then the (symmetric, finite) Kottman constant is continuous with
respect to the interpolation parameter; precisely
$$
|K(X_t) - K(X_s)|\leqslant 4 \left| \frac{\sin \left(\pi(t-s)/2\right)}{\sin \left(\pi(t+s)/2\right)}\right|,\quad 0<s,t<1.
$$
\end{cor}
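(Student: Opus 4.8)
The plan is to chain together two estimates that are already available in the text, treating this as a purely mechanical corollary. First I would recall that Kalton and Ostrovskii's bound gives
$$
d_{\rm K}(X_t, X_s)\leqslant 2\left|\frac{\sin(\pi(t-s)/2)}{\sin(\pi(t+s)/2)}\right|
$$
for all $0<s,t<1$. Second, I would invoke Theorem \ref{kottcontka}, which asserts $|K(X)-K(Y)|\leqslant 2\,d_{\rm K}(X,Y)$ (and the same for $K_s$ and $K_f$). Applying the latter to the pair $X=X_t$, $Y=X_s$ and substituting the former bound immediately yields
$$
|K(X_t)-K(X_s)|\leqslant 2\,d_{\rm K}(X_t,X_s)\leqslant 4\left|\frac{\sin(\pi(t-s)/2)}{\sin(\pi(t+s)/2)}\right|,
$$
which is exactly the claimed inequality. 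Continuity of the (symmetric, finite) Kottman constant with respect to the interpolation parameter then follows because the right-hand side tends to $0$ as $t\to s$ for each fixed $s\in(0,1)$.

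For the symmetric and finite versions one argues verbatim, using the corresponding clauses of Theorem \ref{kottcontka}; no separate work is needed since the Kadets-distance bound is insensitive to which Kottman constant is being estimated. I would spell out the substitution in a single displayed chain of inequalities and then add one sentence pointing out that, since $\sin(\pi(t-s)/2)\to 0$ as $t\to s$ while $\sin(\pi(t+s)/2)$ stays bounded away from $0$ on compact subsets of $(0,1)$, continuity on $(0,1)$ is immediate.

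There is essentially no obstacle here: the proof is a one-line composition of two cited inequalities, and the only thing to be careful about is that Kalton--Ostrovskii's estimate (and hence the corollary) is stated for the open interval $0<s,t<1$, so one should not claim anything at the endpoints $\theta=0,1$ where $X_\theta$ need not even coincide with $X_0$ or $X_1$. Accordingly the statement and proof are both restricted to $(0,1)$, and I would simply note this restriction in passing.
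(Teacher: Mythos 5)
Your argument is exactly the paper's: the corollary is obtained by composing Theorem \ref{kottcontka} (the bound $|K(X)-K(Y)|\leqslant 2\,d_{\rm K}(X,Y)$, also for $K_s$ and $K_f$) with the Kalton--Ostrovskii estimate for $d_{\rm K}(X_t,X_s)$, which is precisely how the paper derives it. The proof is correct and no further comment is needed.
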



Next, we improve Corollary \ref{cor:cont} by establishing log-convexity of the interpolation inequalities, that is, that they are of the form
$K(X_\theta) \leqslant K(X_0)^{1-\theta}\cdot K(X_1)^\theta$.
To do that we need an equivalent description of the complex interpolation method given in \cite{daher} which
we briefly explain in the subsequent paragraphs.\smallskip

We denote by $\overline{X}$ the interpolation couple $(X_0,X_1)$, and for $j=0,1$, $z=s+it\in \mathbb S$ and
$\tau\in\R$ we set $d\mu_{z,j}(t)=Q_j(z,t)dt$ (see \cite{daher}), and for $1\leqslant p<\infty$ and $0<\theta<1$, we denote by
$\mathcal{F}^p_\theta(\overline{X})$ the space of functions $F\colon \overline{\mathbb S}\to X_0+X_1$ such that $F$
is analytic on $\mathbb S$, the functions $F_j(\tau)=F(j+i\tau)$ are Bochner-measurable with values in $X_j$ and satisfy
\begin{eqnarray}\label{eq:norm} 
\|F\|_{\mathcal{F}^p_\theta(\overline{X})}=
\int_\R \|F(it)\|_0^p\, \mu_{\theta,0}({\rm d}t)+\int_\R \|F(1+it)\|_1^p\, \mu_{\theta,1}({\rm d}t)<\infty.
\end{eqnarray}

For $p=\infty$ we similarly define $\mathcal{F}^\infty(\overline{X})$, independent of $\theta$,
replacing condition (\ref{eq:norm}) by
$$
\|F\|_{\mathcal{F}^\infty(\overline{X})}= \max\{\sup_{t\in\R}\|F(it)\|_0,\; \sup_{t\in\R}\|F(1+it)\|_1\}<\infty.
$$

Let us observe that $\mu_{\theta,0}$ and $\mu_{\theta,1}$ are finite measures on $\R$.
Therefore we have the inclusion $\mathcal{F}^\infty(\overline{X})\subset \mathcal{F}^p(\overline{X})$ for
$1\leqslant p<\infty$.
It was proved in \cite{daher} that $X_\theta = \{F(\theta) \colon F\in \mathcal{F}^\infty(\overline{X})\}$ and
$\|x\|_\theta = \inf\{\|F\|_{\mathcal{F}^\infty(\overline{X})} \colon F(\theta)=x\}$.
\smallskip

An interpolation couple $(X_0, X_1)$ is called \emph{regular}, whenever $X_0\cap X_1$ is dense in both $X_0$ and $X_1$.
Given $\theta\in \mathbb S$ and $x\in X_\theta$, an element $f\in \mathcal{F}^\infty(\overline{X})$ is called a
\emph{$1$-extremal for $x$ at $\theta$} if $f(\theta)=x$ and $\|f\|_{\mathcal{F}^\infty(\overline{X})}=\|x\|_\theta$.
We require the following technical result, whose proof is contained in \cite[Th\'eor\`eme]{daher}.
We include some details of the proof for completeness.

\begin{lemma}\label{lemma:extremal}
Let $(X_0, X_1)$ be a regular interpolation pair of reflexive spaces.
Given $x \in X_0\cap X_1$ and $\theta \in (0,1)$ there exists a $1$-extremal $f_{x,\theta}$ for $x$
at $\theta$ such that $\|f_{x, \theta}(z)\|_z = \|x\|_z$ for every $z\in \mathbb{S}$.
\end{lemma}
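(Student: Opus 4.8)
The plan is to prove the existence of the $1$-extremal via a normal families/compactness argument, exploiting reflexivity at each step. The key idea, going back to Daher, is that the extremal function should be a function for which the defining inequality $\|f(z)\|_z \leqslant \|f\|_{\mathcal F^\infty(\overline X)}$ is saturated not just at $z=\theta$ but everywhere in the strip; this extra rigidity will come from a maximum-principle/subharmonicity argument applied to $\log \|f(z)\|_z$.

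First I would set $c = \|x\|_\theta$ and, using the Daher description $X_\theta = \{F(\theta) : F \in \mathcal F^\infty(\overline X)\}$, pick a minimizing sequence $(f_n) \subset \mathcal F^\infty(\overline X)$ with $f_n(\theta) = x$ and $\|f_n\|_{\mathcal F^\infty(\overline X)} \to c$. Since the $X_j$ are reflexive, so are their duals, and the boundary traces $t \mapsto f_n(j+it)$ lie in balls of $L^\infty(\R; X_j)$, which one can pass to the limit in the appropriate weak$^*$/weak sense; combined with the Poisson-integral representation of functions in $\mathcal F^\infty(\overline X)$ on the strip (every such $F$ is recovered from its boundary traces against the harmonic measure $\mu_{z,j}$), this yields a limit function $f \in \mathcal F^\infty(\overline X)$ with $f(\theta) = x$ and $\|f\|_{\mathcal F^\infty(\overline X)} \leqslant c$. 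By definition of $\|\cdot\|_\theta$ the reverse inequality holds, so $f$ is a $1$-extremal for $x$ at $\theta$; this is the part recorded in \cite[Th\'eor\`eme]{daher}.

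The substantive point is upgrading "$\|f(\theta)\|_\theta = \|f\|_{\mathcal F^\infty(\overline X)}$'' to "$\|f(z)\|_z = \|x\|_z$ for \emph{every} $z \in \mathbb S$.'' For the inequality $\|f(z)\|_z \leqslant \|x\|_z$ one argues as follows: given any $w \in \mathbb S$, an affine reparametrisation of the strip together with the conformal invariance of the construction shows that $z \mapsto \|f(z)\|_z$ is log-subharmonic on $\mathbb S$ (this is the standard three-lines-type estimate built into the complex method), so it is dominated by the harmonic extension of its boundary values, which are $\leqslant \|f\|_{\mathcal F^\infty(\overline X)} = c$; hence $\|f(z)\|_z \leqslant c$. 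For the opposite inequality one must exclude that $\log \|f(z)\|_z$ drops strictly below $\log c$ anywhere: if $\|f(z_0)\|_{z_0} < c$ at some $z_0 \in \mathbb S$, then by subharmonicity and the fact that the function equals $\log c$ at $\theta$ one derives — after possibly moving mass on the boundary — a competitor $\tilde f$ with $\tilde f(\theta) = x$ but strictly smaller norm, contradicting extremality; alternatively, one invokes the strict form of the maximum principle for subharmonic functions, noting $\log \|f(z)\|_z$ attains its supremum $\log c$ at the interior point $\theta$, forcing it to be constant $\equiv \log c$ on $\mathbb S$, and then $\|f(z)\|_z = c = \|x\|_\theta$; finally regularity of the pair and the general inequality $\|x\|_z \leqslant \|x\|_\theta^{?}$-type bounds (more precisely the fact that $\|f(z)\|_z$ cannot exceed $c$ while $f(z) \in X_z$ with $\|x\|_z \leqslant \|f(z)\|_z$) pin down $\|x\|_z = c$ as well, giving $\|f(z)\|_z = \|x\|_z$ throughout.

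The main obstacle I anticipate is the limit-extraction step done carefully: one needs that weak limits of bounded sequences in $\mathcal F^\infty(\overline X)$ stay in $\mathcal F^\infty(\overline X)$ with no increase of norm, which requires reflexivity of both $X_0$ and $X_1$ (to get weak compactness of the boundary traces) \emph{and} the Poisson representation to transfer boundary convergence to interior pointwise convergence, so that $f(\theta) = x$ survives; the density hypothesis ($X_0 \cap X_1$ dense in each $X_j$, i.e.\ regularity) is what guarantees the Daher model $\mathcal F^\infty(\overline X)$ genuinely computes $X_\theta$ and that the harmonic-measure representation is valid. The subharmonicity/maximum-principle half is then comparatively soft, and since the statement explicitly says the proof is "contained in \cite[Th\'eor\`eme]{daher}'', in the write-up I would reproduce only the normal-families argument and the one-line maximum-principle observation, citing Daher for the rest.
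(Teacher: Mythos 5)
The existence half of your argument (minimizing sequence, weak compactness of the boundary traces via reflexivity, Poisson representation) is fine, and it is indeed the content of Daher's Th\'eor\`eme, which the paper also simply quotes. The gap is in the half you call ``comparatively soft.'' You make the constancy of $\|f(z)\|_z$ rest on the assertion that $z\mapsto\log\|f(z)\|_z$ is subharmonic on $\mathbb{S}$, presented as ``the standard three-lines-type estimate built into the complex method.'' It is not: the three-lines theorem (equivalently, the easy direction of reiteration) only gives log-convexity of the vertical-line suprema $s\mapsto\sup_t\|f(s+it)\|_s$, and from that, together with the interior maximum at $\theta$, you can conclude only that $\sup_t\|f(s+it)\|_s=\|x\|_\theta$ for every $s$ --- not the pointwise statement $\|f(s+it)\|_s=\|x\|_\theta$ for every $t$, which is what the lemma asserts and what the proof of Theorem \ref{interpol} actually uses. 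Genuine two-variable subharmonicity of $\log\|f(z)\|_z$ (equivalently, plurisubharmonicity of $(z,v)\mapsto\log\|v\|_{\operatorname{Re}z}$) is a strictly stronger fact whose natural proof goes through pairing $f$ with analytic families of functionals, i.e.\ through the duality theorem for the complex method --- and that is exactly where the hypotheses ``regular'' and ``reflexive'' must enter. Your constancy argument never uses either hypothesis, which is the tell-tale sign that the key step has been assumed rather than proved; likewise, ``after possibly moving mass on the boundary'' to build a better competitor is not an argument.

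The paper supplies precisely this missing ingredient in a pointed way that bypasses subharmonicity altogether: normalize $\|x\|_\theta=1$, choose a norming functional $x^*\in X_\theta^*$, invoke Daher's Proposition~3 (this is where regularity and reflexivity are used) to obtain $f^*\in\mathcal{F}^2(\overline{X^*})$ with $f^*(\theta)=x^*$ and norm one, and consider the scalar analytic function $g(z)=\langle f_{x,\theta}(z),f^*(z)\rangle$ (analyticity via \cite[Lemma 4.2.3]{belo}); since $|g|\leqslant 1$ on $\mathbb{S}$ and $g(\theta)=1$, the maximum modulus principle gives $g\equiv 1$, hence $\|f_{x,\theta}(z)\|_z\geqslant 1$ and so $=1$ everywhere. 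If you want to keep your maximum-principle phrasing, you must either prove the subharmonicity by this duality device or cite it explicitly. A secondary point: your attempt to ``pin down $\|x\|_z=c$'' chases a false statement --- for a fixed $x\in X_0\cap X_1$ the function $z\mapsto\|x\|_z$ is not constant in general; what is proved, and what Theorem \ref{interpol} uses, is that $\|f_{x,\theta}(z)\|_z$ is constant in $z$, equal to $\|x\|_\theta$.
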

\begin{proof}
Suppose $\|x\|_\theta=\|f_{x,\theta}\|_{\mathcal{F}^\infty(\overline{X})}=1$.
%
We select $x^*\in X_\theta^*$ such that $\|x^*\|=\langle x,x^*\rangle=1$.
By \cite[part I in Proposition 3]{daher}, there exists $f^*\in \mathcal{F}^2(\overline{X^*})$ with
$f^*(\theta)=x^*$ and $\|f^*\|_{\mathscr{F}^2(\overline{X^*})}=1$.
Applying \cite[Lemma 4.2.3]{belo}, we can show that $g(z)=\langle f_{x, \theta}(z),f^*(z)\rangle$
defines an analytic function.
Since $|g(z)|\leqslant 1$ for every $z \in \mathbb{S}$ and $g(\theta)=1$, the maximum principle
for analytic functions implies that $g(z)= 1$ for every $z \in \mathbb{S}$.
%
Therefore $\|f_{x, \theta}(z)\|_z = 1$ for every $z \in \mathbb{S}$.
\end{proof}


\begin{theorem}\label{interpol}
Let $(X_0, X_1)$ be regular interpolation pair of Banach spaces with $X_0$ reflexive and let $0<a<b<1$.
Then
$$K(X_{(1-\theta)a+\theta b}) \leqslant K(X_a)^{1-\theta} K(X_b)^\theta\quad \big(\theta\in (0,1)\big).$$
The inequality is valid for $K_s(\cdot)$ and $K_f(\cdot)$ as well.
\end{theorem}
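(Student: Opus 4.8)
The idea is to move the separated sequence from the interpolation space at the inner parameter to the two endpoints along the extremal analytic functions of Lemma~\ref{lemma:extremal}, paying for the move with a Hadamard three-lines estimate whose multiplicative form is precisely what produces the log-convex bound. First I would reduce to an endpoint statement. Because $0<a<b<1$ and $X_0$ is reflexive, it is classical that $X_a$ and $X_b$ are reflexive, and the pair $(X_a,X_b)$ is regular, since $X_0\cap X_1$ is dense in every $X_s$ ($0<s<1$) and is contained in $X_a\cap X_b$. By Calder\'on's reiteration theorem $X_{(1-\theta)a+\theta b}=(X_a,X_b)_\theta$ with equality of norms. Hence it suffices to prove: for a regular pair $(Y_0,Y_1)$ of reflexive spaces and $\theta\in(0,1)$,
$$K\big((Y_0,Y_1)_\theta\big)\leqslant K(Y_0)^{1-\theta}K(Y_1)^\theta,$$
and the analogue for $K_s$ and $K_f$. (This is the only place reflexivity is used, and strictness $0<a<b<1$ is exactly what upgrades ``$X_0$ reflexive'' to ``$X_a$, $X_b$ reflexive''.) Write $Y_\theta=(Y_0,Y_1)_\theta$ below.

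Next, for $F\in\mathcal{F}^\infty(\overline{Y})$ I would record the three-lines inequality
$$\log\|F(\theta)\|_{Y_\theta}\ \leqslant\ \int_\R\log\|F(it)\|_0\,\mu_{\theta,0}({\rm d}t)+\int_\R\log\|F(1+it)\|_1\,\mu_{\theta,1}({\rm d}t),$$
obtained by replacing $F(z)$ with $e^{h(z)-h(\theta)}F(z)$, where $\mathrm{Re}\,h$ is the bounded harmonic function on $\mathbb S$ with boundary data $-\log\|F(it)\|_0$, $-\log\|F(1+it)\|_1$, and using the definition of $\|\cdot\|_{Y_\theta}$ together with the Poisson representation of $\mathrm{Re}\,h$. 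Exponentiating and applying Jensen's inequality (recall $\mu_{\theta,0},\mu_{\theta,1}$ have masses $1-\theta$ and $\theta$) yields
$$\|F(\theta)\|_{Y_\theta}\ \leqslant\ \Big(\tfrac{1}{1-\theta}\!\int_\R\|F(it)\|_0\,\mu_{\theta,0}({\rm d}t)\Big)^{1-\theta}\Big(\tfrac{1}{\theta}\!\int_\R\|F(1+it)\|_1\,\mu_{\theta,1}({\rm d}t)\Big)^{\theta}.$$
Now fix $\varepsilon>0$ and $\sigma<K(Y_\theta)$, choose $(x_n)\subset B_{Y_\theta}$ with $\|x_i-x_j\|_{Y_\theta}>\sigma$ for $i\neq j$, and by regularity assume $x_n\in Y_0\cap Y_1$, shrinking $\sigma$ by $\varepsilon$. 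Lemma~\ref{lemma:extremal} supplies, for each $n$, a $1$-extremal $f_n$ for $x_n$ at $\theta$, whence $f_n(\theta)=x_n$ and $\sup_t\|f_n(it)\|_0\leqslant\|x_n\|_{Y_\theta}\leqslant1$, $\sup_t\|f_n(1+it)\|_1\leqslant1$. Applying the estimate above to $F=f_i-f_j$ gives, for all $i\neq j$,
$$\sigma\ <\ \Big(\tfrac{1}{1-\theta}\!\int_\R\|f_i(it)-f_j(it)\|_0\,\mu_{\theta,0}({\rm d}t)\Big)^{1-\theta}\Big(\tfrac{1}{\theta}\!\int_\R\|f_i(1+it)-f_j(1+it)\|_1\,\mu_{\theta,1}({\rm d}t)\Big)^{\theta}.$$

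Finally, for each $t$ the sequences $(f_n(it))_n$ and $(f_n(1+it))_n$ lie in $B_{Y_0}$ and $B_{Y_1}$. Using Lemma~\ref{ramsey}, a diagonal argument over a countable dense set of $t$'s, and a preliminary smoothing of the $f_n$ by a short vertical average (so that $t\mapsto f_n(it)$ becomes equicontinuous on compact sets, the contribution of large $|t|$ being controlled by the exponential decay of $\mu_{\theta,j}$), one passes to a subsequence along which $\|f_i(it)-f_j(it)\|_0$ and $\|f_i(1+it)-f_j(1+it)\|_1$ converge, as $i,j\to\infty$, to limits $\leqslant K(Y_0)$ and $\leqslant K(Y_1)$ respectively — a convergent sequence of pairwise distances in a unit ball witnesses, in its tail, an almost-separated sequence of that ball. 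One further appeal to Lemma~\ref{ramsey} makes both integrals above converge; since the integrands are bounded by $2$ and $\mu_{\theta,j}$ are finite, dominated convergence lets $i,j\to\infty$ and shows the right-hand side tends to a number $\leqslant K(Y_0)^{1-\theta}K(Y_1)^\theta$. Thus $\sigma-\varepsilon\leqslant K(Y_0)^{1-\theta}K(Y_1)^\theta$; letting $\varepsilon\downarrow0$ and $\sigma\uparrow K(Y_\theta)$ finishes the proof. The symmetric version follows by carrying $\pm$ through verbatim, and the finite version by running the scheme with $N$-tuples for each $N$. I expect the delicate point — the main obstacle — to be precisely obtaining, after all the extractions, boundary estimates $\lim_{i,j}\|f_i(it)-f_j(it)\|_0\leqslant K(Y_0)$ valid not merely for $t$ in a countable set but on a set of full $\mu_{\theta,0}$-measure; upgrading pointwise convergence on a dense set to almost everywhere convergence is exactly what the smoothing of the extremal functions and the exponential decay of the harmonic measure of the strip are for.
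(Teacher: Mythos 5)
Your set-up is sound and close in spirit to the paper's: the reduction facts (reflexivity of $X_a,X_b$, regularity of the pair $(X_a,X_b)$, reiteration), the use of Daher's space $\mathcal{F}^\infty$, the $1$-extremals of Lemma \ref{lemma:extremal}, and the Jensen-normalised three-lines estimate are all fine. But the reduction changes the problem in a way that opens a genuine gap: by reiterating you convert the statement into the endpoint inequality $K\big((Y_0,Y_1)_\theta\big)\leqslant K(Y_0)^{1-\theta}K(Y_1)^\theta$ for the couple $(Y_0,Y_1)=(X_a,X_b)$, i.e.\ exactly the case $a=0$, $b=1$ which the paper explicitly leaves open immediately after Theorem \ref{interpol}. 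The paper's proof is engineered to avoid this: the extremals are kept for the original couple and are evaluated only on the interior lines $\operatorname{Re}z=a$ and $\operatorname{Re}z=b$, where Lemma \ref{lemma:extremal} provides the rigidity $\|f_{n,\gamma}(z)\|_z=\|x_n\|_z$ and no boundary values are ever used; in your version all the weight falls on the boundary traces $t\mapsto f_n(it)$, $t\mapsto f_n(1+it)$, which for members of $\mathcal{F}^\infty$ are merely Bochner-measurable, defined almost everywhere, and carry no extremality information.

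The concrete gap is the final extraction, the very step you flag as the main obstacle, and neither of your devices closes it. First, a vertical mollification $f_n^\eta$ no longer satisfies $f_n^\eta(\theta)=x_n$; to keep the left-hand side of your key display you would need $\|f_n^\eta(\theta)-x_n\|_\theta\to0$ uniformly in $n$, which is not available for $\mathcal{F}^\infty$-extremals (their boundary behaviour is only $L_\infty$, and no equicontinuity uniform in $n$ can be assumed), so after smoothing the three-lines inequality no longer bounds $\|x_i-x_j\|_\theta$. Second, the statement you need --- one subsequence along which, for $\mu_{\theta,0}$-a.e.\ $t$, $\limsup_{i\neq j}\|f_i(it)-f_j(it)\|_0\leqslant K(Y_0)$ --- is false for general uniformly bounded measurable boundary families: with $Y_0=\ell_2$ and boundary data behaving like $g_n(t)=r_n(t)e_1$ for independent sign patterns $r_n(t)=\pm1$, every infinite subset contains, for almost every $t$, pairs at distance $2>\sqrt2=K(\ell_2)$ infinitely often, so no subsequence achieves the a.e.\ bound. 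The fixed-$t$ Ramsey subsequences genuinely depend on $t$, and no countable-dense diagonalisation, smoothing, or decay of the harmonic measure merges them; a correct argument would have to exploit the analytic and extremal structure of the $f_n$ at the boundary, which your proof never does --- and doing so is, in substance, the open endpoint question. The safe route is the paper's: keep $0<a<b<1$, work with the extremals on those interior lines via their constant-norm property (combined, at a fixed line, with a Lemma \ref{ramsey}-type extraction), and never descend to the boundary.
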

\begin{proof}
Denoting $\gamma =(1-\theta)a+\theta b$, we have $\|x\|_\gamma\leqslant \|x\|_a^{1-\theta}\|x\|_b^\theta$ for each $x\in X_a\cap X_b$.

Let $\varepsilon > 0$. We pick an almost optimal Kottman sequence in $X_\gamma$, that is, a sequence $(x_n)_{n=1}^\infty$
such that $\|x_n\|_\gamma=1$ and $K(X_\gamma) - \e\leqslant \|x_n - x_m\|\leqslant K(X_\gamma) + \e$ for $n\neq m$.
Since the interpolation pair is regular, we can assume $(x_n)_{n=1}^\infty\subset X_0\cap X_1\subset X_a\cap X_b$.
For each $n$ we take the $1$-extremal $f_{n,\gamma}$ for $x_n$ at $\gamma$, given by Lemma \ref{lemma:extremal}.
Then $\|f_{n,\gamma}\|_{\mathcal{F}^\infty(\overline{X})}= \|f_{n,\gamma}(\gamma)\|_\gamma=1$ and
\begin{eqnarray*}\label{eq:K-interp}
K(X_\gamma)-\e &\leqslant &   \|x_n - x_m\|_\gamma \\
&=&  \|f_{n,\gamma}(\gamma) - f_{m,\gamma}(\gamma)\|_\gamma\\
&\leqslant& \sup_{t\in\R} \|f_{n,\gamma}(a+it) - f_{m,\gamma}(a+it)\|_{a+it}^{1-\theta}\;
\sup_{t\in\R} \|f_{n,\gamma}(b+it) - f_{m,\gamma}(b+it)\|_{b+it}^\theta
\end{eqnarray*}
by Hadamard's three-lines theorem \cite[Lemma 1.1.2]{belo}.

By Lemma \ref{lemma:extremal}, for each $t\in\R$ we have
$\|f_{n,\gamma}(a+it)\|_{a+it}=\|f_{n,\gamma}(\gamma)\|_\gamma=\|x_n\|_\gamma =1$, and similarly
$\|f_{m,\gamma}(a+it)\|_{a+it}=\|f_{n,\gamma}(b+it)\|_{b+it}=\|f_{n,\gamma}(b+it)\|_{b+it}=1$.
Moreover, by the invariance of the strip $\mathbb S$ under vertical translations, given $s\in (0,1)$
and $t\in \R$ we have $X_{s+it} = X_s$ with equal norms.
Thus the previous chain of inequalities gives $K(X_\gamma)-\e\leqslant K(X_a)^{1-\theta}K(X_b)^\theta$,
proving the result.

The same argument works for both the symmetric and finite Kottman constants.
\end{proof}

It would be interesting to know if Theorem \ref{interpol} is valid with $a=0$ and $b=1$.
\medskip


A forerunner of Theorem \ref{interpol} appears in \cite[Theorem 1]{appsem} in the following form:
\emph{If $0 < p < 1$ and $E$ is a $\theta$-Hilbert space, then} $K_f(E)\leqslant 2^{1-\theta/2}$.
This formula matches the $K_f$-inequality in Theorem \ref{interpol}, as indeed, $E$ is  a~$\theta$-Hilbert space according to Pisier \cite{pisi}, whenever $E=(X, H)_\theta$ for a Hilbert space $H$.
Note that we may always assume that $X$ is reflexive because $X_1$ reflexive implies reflexivity of $X_t$ for all $t\in (0,1)$. Thus, Theorem \ref{interpol} gives the following estimate:
$$
K_f(E) \leqslant K_f(X)^{1-\theta}K_f(H)^\theta \leqslant 2^{1-\theta}2^{\theta/2} =  2^{1-\theta/2}.
$$





An interesting case occurs when one considers a K\"othe space $\lambda$ of $\mu$-measurable functions
and its $p$-convexification $\lambda_p$ for $1\leqslant p<+\infty$ endowed with the norm $\|x\|_p = \| |x|^p \|^{1/p}$. For $p=\theta^{-1}$ we have $\lambda_p = (L_\infty(\mu), \lambda)_\theta$ \cite[Proposition 3.6]{cfg}. Conversely, if $X$ is $p$-convex and $X^{p}$ is the $p$-concavification of $X$, then $X=(L_\infty(\mu),X^p)_{1/p}$,
which yields $K(\lambda_p)\leqslant K(\lambda)^{1/p}2^{1/p^*}.$
\medskip


Calderon's paper \cite{calde} contains a general interpolation result for vector sums that we describe now.
Let $\lambda$ be a K\"othe space of $\mu$-measurable functions. Given a Banach space $X$ one can form the vector
valued space $\lambda(X)$ of measurable functions $f\colon S \to X$ such that the function
$\widehat f (\cdot) = \|f( \cdot)\|_X\colon S\to \R$ given by $t \to \|f(t)\|_X$ is in $\lambda$, endowed with the norm
$\| \|f(\cdot)\|_X \|_\lambda$.

\begin{prop}\label{Cinterpolation}
Fix $0<\theta<1$. Let $(\lambda_0, \lambda_1)$ be an interpolation couple of Banach function spaces on the same measure space for which $(\lambda_0, \lambda_1)_\theta = \lambda_0^{1-\theta}\lambda_1^{\theta}$, and let $(X_0, X_1)$ be an
interpolation couple of Banach spaces. Suppose that $\lambda_0(X_0)$ is reflexive. Then
$$(\lambda_0(X_0), \lambda_1(X_1))_\theta = \lambda_0^{1-\theta}\lambda_1^\theta \left ((X_0, X_1)_\theta\right).$$
\end{prop}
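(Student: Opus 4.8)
The plan is to follow Calderón's original scheme for the interpolation of vector-valued Köthe spaces, adapting it to the reflexive situation where the Daher-type extremal functions (Lemma \ref{lemma:extremal}) are available. First I would establish the easy inclusion $\lambda_0^{1-\theta}\lambda_1^\theta\big((X_0,X_1)_\theta\big)\subset (\lambda_0(X_0),\lambda_1(X_1))_\theta$: given $g$ in the former space with norm close to $1$, one writes $g(s)=h(s)\,u(s)$ with $h\in\lambda_0^{1-\theta}\lambda_1^\theta$ of norm $\le 1$ (so $h=h_0^{1-\theta}h_1^\theta$ with $h_j\in\lambda_j$ of norm $\le 1$) and $u(s)\in (X_0,X_1)_\theta$ of norm controlled; then one selects for a.e.\ $s$ a near-optimal analytic lift $F_s\in\mathscr C(X_0,X_1)$ of $u(s)$ and assembles the scalar-weighted function $z\mapsto \big(h_0(s)^{1-z}h_1(s)^{z}\big)F_s(z)$, checking boundary bounds in $\lambda_j(X_j)$ and analyticity/measurability in $z$ jointly in $s$ via a measurable-selection argument. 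This direction holds without any reflexivity assumption.

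For the reverse inclusion $(\lambda_0(X_0),\lambda_1(X_1))_\theta\subset \lambda_0^{1-\theta}\lambda_1^\theta\big((X_0,X_1)_\theta\big)$ I would use the reflexivity of $\lambda_0(X_0)$. Take $g\in(\lambda_0(X_0),\lambda_1(X_1))_\theta$ with interpolation norm $\le 1$; by the Daher description $X_\theta=\{F(\theta):F\in\mathcal F^\infty\}$ applied to the couple $(\lambda_0(X_0),\lambda_1(X_1))$, together with Lemma \ref{lemma:extremal} (whose hypotheses are met since $\lambda_0(X_0)$ reflexive forces $\lambda_1(X_1)$, and hence both, to be reflexive, and regularity is inherited), pick a $1$-extremal $f$ for $g$ at $\theta$ with $\|f(z)\|_{(\lambda_0(X_0),\lambda_1(X_1))_z}=\|g\|_\theta$ for all $z\in\mathbb S$. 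Now define, for each $s$ in the measure space, the pointwise weight $h(s)=\|g(s)\|_{X_\theta}$ (finite a.e.\ because $g(s)\in (X_0,X_1)_\theta$ for a.e.\ $s$, which itself needs a Fubini-type argument on $f$), normalise $u(s)=g(s)/h(s)$, and show $h=h_0^{1-\theta}h_1^\theta$ by reading off the boundary values $h_j(s)=\|f(j+i\cdot)(s)\|_{X_j}$ in the appropriate $L^p(\mu_{\theta,j})$-averaged sense and invoking $(\lambda_0,\lambda_1)_\theta=\lambda_0^{1-\theta}\lambda_1^\theta$. The norm bookkeeping then gives $\|h\|_{\lambda_0^{1-\theta}\lambda_1^\theta}\le 1$ and $\sup_s\|u(s)\|_{X_\theta}\le 1$, completing the inclusion.

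The main obstacle I expect is making the pointwise factorisation $g(s)=h(s)u(s)$ rigorous in a jointly measurable way and simultaneously propagating the equality $\|f(z)(s)\|$-profiles through both the scalar interpolation (to land in $\lambda_0^{1-\theta}\lambda_1^\theta$) and the vector interpolation (to certify $u(s)\in X_\theta$). Specifically, from $\|f(z)\|_{(\lambda_0(X_0),\lambda_1(X_1))_z}$ being constant one must extract that the \emph{pointwise} norms $z\mapsto \|f(z)(s)\|_{X_z}$ behave like $\|x\|_z$ for an extremal of $x=g(s)$ at $\theta$, for a.e.\ $s$; this is the heart of Calderón's lemma and requires an appeal to uniqueness/rigidity of the scalar extremal in $\lambda_0^{1-\theta}\lambda_1^\theta$ together with the maximum-principle argument already used in the proof of Lemma \ref{lemma:extremal}, now applied fibrewise. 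I would handle this by first treating simple functions $g$ (finite sums $\sum \chi_{A_i}\otimes x_i$), where the factorisation is explicit and the analytic lift is a finite combination of scalar outer functions times vector extremals $f_{x_i,\theta}$, and then passing to the general case by density (using regularity of $(\lambda_0,\lambda_1)$ and of $(X_0,X_1)$) and a routine limiting argument, closing the norm estimates with Fatou's lemma.
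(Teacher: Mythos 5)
You should first know that the paper does not actually prove Proposition~\ref{Cinterpolation}: it is quoted as Calder\'on's vector-valued interpolation theorem from \cite{calde}, with reflexivity of $\lambda_0(X_0)$ serving as a sufficient hypothesis, so your argument has to stand on its own — and as written it has genuine gaps. The most concrete one is the claim that reflexivity of $\lambda_0(X_0)$ ``forces $\lambda_1(X_1)$, and hence both, to be reflexive''. This is false: take $\lambda_0=L_2$, $\lambda_1=L_1$, $X_0=X_1=\mathbb{C}$ (or $\ell_2$); then $\lambda_0(X_0)=L_2$ is reflexive, $\lambda_1(X_1)=L_1$ is not, and all hypotheses of the proposition hold. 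The whole point of the statement (as in Theorem~\ref{interpol}) is that only one space of the couple is assumed reflexive; regularity of the couple $(\lambda_0(X_0),\lambda_1(X_1))$ is not assumed either. Consequently Lemma~\ref{lemma:extremal} is not available for the vector-valued couple, and the $1$-extremal with constant norm profile on which your whole second inclusion is built is not at your disposal.

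Even granting such an $f$, the step you yourself flag as ``the heart of Calder\'on's lemma'' is exactly what is missing and is not a technicality: from $f$ admissible for the couple $(\lambda_0(X_0),\lambda_1(X_1))$ you must show that for a.e.\ $s$ the slice $z\mapsto f(z)(s)$ is analytic with boundary values controlled in $X_0,X_1$ in an integrated (Poisson-kernel) sense, and deduce the pointwise estimate $\|g(s)\|_{X_\theta}\leqslant h_0(s)^{1-\theta}h_1(s)^{\theta}$ with $h_j\in\lambda_j$ of controlled norm. Since $z\mapsto\|f(z)(s)\|$ is not analytic, the assumed scalar identity $(\lambda_0,\lambda_1)_\theta=\lambda_0^{1-\theta}\lambda_1^{\theta}$ cannot simply be ``invoked'' on it, and no uniqueness or rigidity of scalar extremals substitutes for Calder\'on's fibrewise log-subharmonicity argument; your fallback to simple functions does not bypass it, because an admissible $f$ for a simple $g$ need not be simple (or even countably valued) in $s$. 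Finally, your ``easy'' direction is not assumption-free: already in the scalar case $\lambda_0^{1-\theta}\lambda_1^{\theta}\subset(\lambda_0,\lambda_1)_\theta$ can fail (e.g.\ for weighted $\ell_\infty$ couples, where the interpolation space is only the closure of the intersection in the Calder\'on product), so your assembled function is admissible only after using the assumed scalar identity and, for the infinite countably-valued sums, the order continuity that reflexivity of $\lambda_0(X_0)$ provides — which is the real role of that hypothesis, one that never enters your write-up correctly. As it stands, the honest route is to reproduce (or cite) Calder\'on's argument and check that the reflexivity assumption supplies the absolute continuity it requires.
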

In general, the interpolation formula yields
\begin{eqnarray*}
K\left( (\lambda_0(X_0), \lambda_1(X_1))_\theta \right) &\leqslant& K\left( \lambda_0(X_0)\right)^{1-\theta} 
K\left( \lambda_1(X_1)\right)^{\theta}\\
&= &\max \lbrace K(\lambda_0), K(X_0)\rbrace ^{1-\theta} \max \lbrace K(\lambda_1), K(X_1)\rbrace ^{\theta} 
\end{eqnarray*}
according to \cite[Proposition 1.1]{castpapi}. However, under the conditions above one obtains the estimate
\begin{eqnarray*}
K\left( \lambda_0^{1-\theta}\lambda_1^\theta \left ((X_0, X_1)_\theta\right) \right) &=& 
\max \lbrace K\left(\lambda_0^{1-\theta}\lambda_1^\theta\right), K \left( (X_0, X_1)_{\theta}\right) \rbrace\\
&\leqslant &\max \lbrace K(\lambda_0)^{1-\theta}K(\lambda_1)^{\theta},  K(X_0)^{1-\theta}K(X_1)^{\theta} \rbrace
\end{eqnarray*}
which is, in general, better.\smallskip

The result translates verbatim to the cases of symmetric and finite Kottman constants.\medskip

\begin{remark}
The interpolation formulae for $K(\cdot)$ and $K_f(\cdot)$ are somewhat surprising.
To explain why it is, let us recall the following parameters of a (bounded, linear) operator $T\colon X\to X$ on a Banach space $X$. The \emph{outer entropy numbers} of $T$ are defined by
$$e_n(T) = \inf\Big\{ \sigma \geqslant  0: \exists y_1, \dots, y_n\colon \; T\left( B_X\right ) \subset \bigcup y_i + \sigma B_X\Big\},$$
while the \emph{inner entropy numbers} are defined by
$$f_n(T) = \sup \Big\{ \sigma \geqslant  0\colon \exists x_1, \dots, x_n\colon \; \|x_i - x_j\| \geqslant  \sigma \Big\},$$
see \cite[Chapter 12]{pietsch} for more details. \smallskip

\noindent \emph{Warning!}~Pietsch calls $f_n$ what in our case is $\frac{1}{2}f_{2^n}$ and $e_n$ for what we denote by $e_{2^n}$; this is irrelevant for our discussion, though.\smallskip

It is clear that $K_f(X) = \lim\sup f_n({\rm id}_X)$ while  $\beta(X) = \lim\inf e_n({\rm id}_X)$ is the Carl and Stephani measure of non-compactness \cite{cast}.
Pietsch presents interpolation formulae for both inner and outer entropy numbers, however only in the setting of operators with a fixed domain or
codomain, which is not the case when one consider identities.
Theorem \ref{interpol} yields that in fact
$$\lim\sup f_n({\rm id}_{X_\theta}) \leqslant  \lim\sup f_n({\rm id}_{X_0})^{1-\theta}\lim\sup f_n({\rm id}_{X_1})^\theta.$$
The case of $\beta$ is remarkable since there are interpolation formulae for $\beta$ \cite{cobos,sz}, although not for the entropy numbers \cite{edm}.
\end{remark}

\section{The isomorphic Kottman constant for twisted sums}

When a space $X$ is defined by an exact sequence $0\to Y \to X \to Z \to 0$ then it usually lacks a canonical norm, and it may have several realisations up to an isomorphism.

Probably, the best example is the Kalton--Peck $Z_2$ space \cite{kaltpeck}: this space is defined to be
a~non-trivial twisted Hilbert space; namely, there exists an exact sequence $0\to \ell_2 \to Z_2 \to \ell_2 \to 0$
that does not split and thus the space $Z_2$ cannot be isomorphic to a Hilbert space.
To construct the space $Z_2$ we require a non-trivial quasi-linear map $\Omega\colon \ell_2\to \ell_2$, actually a map given by $$\Omega(x) = x\log\left(|x_n|/\|x\|_2\right) \quad (x\in \ell_2).$$
The space $Z_2$ carries a natural quasi-norm given by $\|(y,x)\|= \|y - \Omega x\|_2 + \|x\|_2$ ($(y,x)\in Z_2$). In order to prove that it is a
Banach space one must invoke a deep result of Kalton \cite{kalt} showing that the convex hull of the unit ball of the preceding quasi-norm actually provides an equivalent topology.
In \cite{cgp} it was shown that the Kottman constant of this norm is strictly bigger than $\sqrt{2}$.
The question of whether the infimum of the Kottman constants taken on renormings of $Z_2$ is equal to $\sqrt{2}$ (\cite[Problem 2]{cgp}) emerges from there.\medskip

Thus, to study the Kottman constant of a twisted sum $X$ with no specific norm, it is natural to consider the isomorphic Kottman constant, $\tilde K(X)$, as introduced in \cite{cgp}; it is the infimum of the Kottman constants of all renormings of $X$. One can analogously define the isomorphic symmetric or finite Kottman constants: ${\widetilde K_s}(X)$ and $\widetilde K_f(X)$.
It is clear that the three parameters $\tilde K(\cdot)$, ${\widetilde K_s}(\cdot)$, and $\widetilde K_f(\cdot)$ are continuous with respect to the Kadets metric too.\smallskip

As for the interpolation issues, if the couple $(X_0, X_1)$ is replaced by some isomorphic copy  $(\tilde X_0, \tilde X_1)$, then one gets an interpolation space $\tilde X_\theta$ isomorphic to $X_\theta$. Therefore, also the three parameters $\tilde K(\cdot)$, ${\widetilde K_s}(\cdot)$,
and $\widetilde K_f(\cdot)$ are continuous with respect to the interpolation parameter and verify moreover the interpolation
inequality. In particular, one also obtains the inequality $2 \leqslant \tilde K_s(X)\cdot \tilde K_s(X^*)$.

In this section we solve problems (1, 2) posed in \cite{cgp}. Problem (1) was to establish the equality
$\tilde K(X) = \max \{\tilde K(Y), \tilde K(Z)\}$, when $X$ is a twisted sum of $Y$ and $Z$. We then prove the following fact.

\begin{prop}\label{sol(1)}
Let $0 \to Y \to X \to Z \to 0$ be an exact sequence of Banach spaces. Then $$\tilde K(X) = \max \{\tilde K(Y), \tilde K(Z)\}.$$
Analogous inequalities hold for $\tilde K_s(\cdot)$ and $\tilde K_f(\cdot)$ too.\end{prop}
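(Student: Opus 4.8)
The plan is to establish the two inequalities $\tilde K(X)\geqslant \max\{\tilde K(Y),\tilde K(Z)\}$ and $\tilde K(X)\leqslant \max\{\tilde K(Y),\tilde K(Z)\}$ separately, treating $K$, $K_s$, $K_f$ uniformly since the arguments are identical.

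For the lower bound, I would use the fact that $Y$ is (isometrically, for any renorming of $X$ restricted to it) a subspace of $X$ and that $Z$ is a quotient of $X$. Since $K(\cdot)$ is monotone under passing to subspaces — any separated sequence in $B_Y$ is a separated sequence in $B_X$ — every renorming $\tilde X$ of $X$ induces a renorming of $Y$ with $K(\tilde X)\geqslant K(\tilde X|_Y)\geqslant \tilde K(Y)$; taking the infimum over renormings of $X$ gives $\tilde K(X)\geqslant \tilde K(Y)$. For the quotient $Z$, the point is that $K(\tilde X)\geqslant K(\tilde X/Y)$: given a separated sequence in the open unit ball of $\tilde X/Y$, one lifts it to a bounded sequence in $\tilde X$ (with a loss that can be absorbed by $\varepsilon$), and separation only improves under the quotient map being $1$-Lipschitz in reverse — more precisely one works with norm-attaining lifts and notes $\|x_n-x_m\|_{\tilde X}\geqslant \|q(x_n)-q(x_m)\|_{\tilde X/Y}$. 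Again taking the infimum over renormings yields $\tilde K(X)\geqslant \tilde K(Z)$. (For $K_f$ one runs the same argument with finite families; for $K_s$ one keeps track of both $\pm$ combinations, which survive all these operations.)

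The substantive direction is the upper bound $\tilde K(X)\leqslant \max\{\tilde K(Y),\tilde K(Z)\}$, and this is where I expect the main obstacle to lie. The strategy is: given $\varepsilon>0$, choose a renorming $\|\cdot\|_Y$ of $Y$ with $K(Y,\|\cdot\|_Y)\leqslant \tilde K(Y)+\varepsilon$ and a renorming $\|\cdot\|_Z$ of $Z$ with $K(Z,\|\cdot\|_Z)\leqslant \tilde K(Z)+\varepsilon$, and then build a renorming of $X$ whose Kottman constant is controlled by $\max$ of these two. The natural candidate is a twisted-sum norm built from a quasi-linear map $F\colon Z\to Y$ representing the sequence, of the type $\||(y,z)|\| = \|y-Fz\|_Y + \|z\|_Z$ (made into a genuine norm by Kalton's theorem, or by passing to the convex hull of the corresponding ball), with a large scaling parameter inserted in front of one summand. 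The idea is that if one uses the norm $\|y-Fz\|_Y + N\|z\|_Z$ for $N$ large, then a normalised sequence either has its $Z$-components bounded away from zero — in which case, after normalising in $Z$ and passing to a subsequence, its separation is governed by $K(Z,\|\cdot\|_Z)$ up to $O(1/N)$ — or its $Z$-components are small, in which case it essentially lives in $Y$ and its separation is governed by $K(Y,\|\cdot\|_Y)$. Making this dichotomy precise, uniformly over all separated sequences, and checking that the boundedness of $F$ on the relevant scales does not destroy the estimate, is the technical heart; one must be careful that $F$ is only quasi-linear, so $\|F(z_n-z_m)\|_Y$ is not simply $\|Fz_n-Fz_m\|_Y$, but the quasi-linearity defect is additively bounded and hence negligible after the scaling.

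Finally I would let $\varepsilon\to 0$ and $N\to\infty$ in the right order to conclude $\tilde K(X)\leqslant \max\{\tilde K(Y),\tilde K(Z)\}+\varepsilon$ for every $\varepsilon$, hence equality. The symmetric and finite versions follow by the same construction, since the twisted-sum norm and the scaling argument are insensitive to whether one tracks $\|x_n-x_m\|$, $\|x_n\pm x_m\|$, or finite subfamilies; the only change is replacing $K$ by $K_s$ or $K_f$ throughout and invoking Lemma~\ref{ramsey} (or its finitary analogue) to pass to convergent subsequences of the relevant norm-differences.
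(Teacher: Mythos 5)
Your lower bound (monotonicity of the constants under subspaces and under quotients via almost-isometric lifts) is fine, and your choice of renorming is in substance the same as the paper's: after the change of variable $w=Nz$ and homogeneity of $F$, the quasi-norm $\|y-Fz\|_Y+N\|z\|_Z$ is the twisted-sum quasi-norm associated with $F/N$, which is exactly the paper's renorming $X_\varepsilon=\{(\varepsilon x,qx):x\in X\}\subset X\oplus_1 Z$ (the paper itself identifies it with replacing $\Omega$ by $\varepsilon\Omega$). Where you diverge is in how the upper estimate is finished: the paper does not estimate separated sequences by hand, but combines gap-continuity of $K$ (Lemma \ref{contg}), Ostrovskii's estimate $g(X_\varepsilon,Y\oplus_1 Z)\to 0$ from \cite{ost}, and the two-summand formula $K(Y\oplus_1 Z)=\max\{K(Y),K(Z)\}$ from \cite{castpapi}, after first renorming $Y$ and $Z$ almost optimally.

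The direct estimate, as you sketch it, has a genuine gap: the dichotomy ``either the $Z$-components are bounded away from zero, and then the separation is governed by $K(Z)$ up to $O(1/N)$, or they are small, and then it is governed by $K(Y)$'' fails in the mixed regime. Take a normalized sequence with $\|y_n-Fz_n\|\to\alpha$ and $N\|z_n\|\to\beta$, say $\alpha=\beta=\tfrac12$ (e.g.\ vectors of the form $(\tfrac12 e_n,\tfrac1{2N}f_n)$ in a twisted Hilbert space): the first horn controls only the $Z$-coordinate, and bounding the $Y$-coordinate trivially by its diameter yields $2\alpha+K(Z)\beta=1+\tfrac{\sqrt2}{2}>\sqrt2$, which proves nothing. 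What is needed is the simultaneous, weighted estimate: stabilize $\|y_n-Fz_n\|$, $N\|z_n\|$ and, via Lemma \ref{ramsey}, both families of pairwise distances along one common subsequence, to get separation at most $K(Y)\alpha+K(Z)\beta+O(1/N)\leqslant\max\{K(Y),K(Z)\}+O(1/N)$; this is precisely (a perturbed form of) the $\ell_1$-sum formula the paper quotes, so you must either prove it or transfer to $Y\oplus_1 Z$ by a gap estimate as the paper does. Relatedly, invoking ``Kalton's theorem or the convex hull'' to turn your quasi-norm into a norm is not yet an argument: the Kottman constant must be computed for a genuine renorming of $X$, and you need the equivalence constant between the quasi-norm and a norm to tend to $1$ as $N\to\infty$ --- which does hold, e.g.\ by comparing with $\|y+Lz\|_X+N\|z\|_Z$ for a linear selection $L$, the discrepancy being at most a fixed multiple of $\|z\|_Z$, hence $O(1/N)$ on the unit ball --- but Kalton's theorem alone gives no such uniform control; the paper sidesteps this entirely by working inside $X\oplus_1 Z$, where the norm is genuine from the start.
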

\begin{proof} Again, there is no loss of generality in assuming that $\tilde K(X) = K(\tilde X)$. Thus

$$|\tilde K(A) - \tilde K(B)| = |K(\tilde A) - K(\tilde B)|\leqslant 2\cdot g(\tilde A, \tilde B).$$

The space $Y\oplus_1 Z$ is a subspace of $X\oplus_1 Z$. Let $q:X\to Z$ denote the quotient map. For each positive $\varepsilon$, the subspace $X_\varepsilon= \{(\varepsilon x, qx)): x\in X\}$ of $X\oplus_1 Z$ is isomorphic to $X$.
Both equalities follow from $\lim_{\varepsilon\to 0} g(X_\varepsilon, Y\oplus_1 Z)=0$,
which is a consequence of \cite[Lemma 5.9]{ost}.
\end{proof}

Problem (2) was to show that the isomorphic Kottman constant of $Z_2$ is $\sqrt{2}$. Indeed, we prove the following identity.

\begin{cor} If $X$ is a twisted Hilbert space then $\tilde K(X)=\tilde K_s(X)=\tilde K_f(X)=\sqrt{2}$.
\end{cor}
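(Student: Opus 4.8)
The plan is to combine Proposition~\ref{sol(1)} with the elementary computation of the isomorphic Kottman constant of a Hilbert space. First I would record that for an infinite-dimensional Hilbert space $H$ one has $K(H)=K_s(H)=K_f(H)=\sqrt 2$: an orthonormal sequence $(e_n)$ satisfies $\|e_n\pm e_m\|=\sqrt 2$ for $n\neq m$, giving the lower bound $\sqrt 2$ for all three constants, while the parallelogram law forces $\|x_n-x_m\|^2+\|x_n+x_m\|^2=2\|x_n\|^2+2\|x_m\|^2\leqslant 4$, so that $\min\{\|x_n-x_m\|,\|x_n+x_m\|\}\leqslant\sqrt 2$ for any sequence in $B_H$, yielding the matching upper bounds $K(H)\leqslant\sqrt 2$ and $K_s(H)\leqslant\sqrt 2$ (and then $K_f(H)\leqslant K(H)\leqslant\sqrt 2$ since $K_f\leqslant K$ always, the finite version being a supremum over fewer constraints—actually $K_f(H)\leqslant\sqrt2$ follows directly from the same parallelogram estimate applied to any finite family). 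Since the Banach--Mazur distance of $H$ to itself is $1$, no renorming of $H$ can lower these constants below the value they already attain with the Hilbertian norm, except that $\tilde K$ is an infimum over \emph{all} renormings, so one must argue the infimum is not smaller than $\sqrt2$; but in fact $\tilde K(H)=\sqrt 2$ because every renorming $\tilde H$ is still a space containing almost-isometric copies of $\ell_2^n$ of every dimension—more simply, one can invoke that $K_f$ is invariant under the relevant operations, or just note $\tilde K(H)\leqslant K(H)=\sqrt 2$ together with the fact that any space $\tilde H\cong H$ has $K(\tilde H)\geqslant\sqrt2$ since the only Banach space with $K<\sqrt2$ cannot be isomorphic to $\ell_2$; cleanest is to cite \cite{cgp}, where $\tilde K(\ell_2)=\sqrt2$ is established, so I would simply quote it.

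Next, by definition a twisted Hilbert space $X$ fits into an exact sequence $0\to H_1\to X\to H_2\to 0$ with $H_1,H_2$ Hilbert spaces. Applying Proposition~\ref{sol(1)} (and its symmetric and finite analogues, stated in the same proposition) gives
$$
\tilde K(X)=\max\{\tilde K(H_1),\tilde K(H_2)\}=\max\{\sqrt2,\sqrt2\}=\sqrt2,
$$
and identically $\tilde K_s(X)=\max\{\tilde K_s(H_1),\tilde K_s(H_2)\}=\sqrt2$ and $\tilde K_f(X)=\max\{\tilde K_f(H_1),\tilde K_f(H_2)\}=\sqrt2$. This is the whole argument; it is a direct corollary.

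The only point requiring the slightest care—and the thing I would flag as the ``main obstacle,'' although it is minor—is making sure that $\tilde K_s(\ell_2)=\tilde K_f(\ell_2)=\sqrt2$ and not merely $\tilde K(\ell_2)=\sqrt2$. For the symmetric constant the parallelogram-law bound above already shows $K_s(H)=\sqrt2$ for the Hilbert norm, and a lower bound $\tilde K_s(\ell_2)\geqslant\sqrt2$ across all renormings follows because $\tilde K_s\geqslant\widetilde{K}\,$? no—rather one uses that any renorming of $\ell_2$ still has $K_s\geqslant\sqrt2$, which can be seen from the $2$-summability/type-cotype structure being preserved up to isomorphism, or again simply quoted from \cite{cgp,H-kania-R}. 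For $K_f$ the same reasoning applies, with $K_f(H)=\sqrt2$ for the Hilbert norm and the lower bound across renormings coming from the isomorphic invariance packaged into Proposition~\ref{sol(1)} itself (the proposition's statement for $\tilde K_f$ is exactly what transfers the value). So in the written proof I would state the Hilbert-space values, cite \cite{cgp} for their isomorphic invariance, invoke Proposition~\ref{sol(1)} in its three forms, and conclude in two lines.
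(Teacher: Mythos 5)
Your overall route is the same as the paper's: the corollary is intended as an immediate consequence of Proposition~\ref{sol(1)} applied to $0\to\ell_2\to X\to\ell_2\to 0$ (in its three versions), combined with the value $\tilde K(\ell_2)=\tilde K_s(\ell_2)=\tilde K_f(\ell_2)=\sqrt2$, which the paper does not re-prove but inherits from \cite{cgp}. So the skeleton of your argument is correct and matches the intended proof.

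However, several of your auxiliary justifications are wrong and should be deleted in favour of the citation. First, the inequality $K_f\leqslant K$ is backwards: one always has $K(X)\leqslant K_f(X)$ (an infinite separated sequence yields arbitrarily large finite separated sets), as the paper itself records. Second, the two-point parallelogram law only bounds $\min\{\|x-y\|,\|x+y\|\}$ and hence gives $K_s(H)\leqslant\sqrt2$; it does not give $K(H)\leqslant\sqrt2$ or $K_f(H)\leqslant\sqrt2$ (note that finitely many unit vectors in a Hilbert space can be pairwise separated by more than $\sqrt2$, e.g.\ three unit vectors at mutual angle $120^\circ$). For those bounds you need the quadratic identity $\sum_{i<j}\|x_i-x_j\|^2=N\sum_i\|x_i\|^2-\bigl\|\sum_i x_i\bigr\|^2\leqslant N^2$, which forces any $N$-point separation to be at most $\sqrt{2N/(N-1)}$ and hence $K(H)\leqslant K_f(H)\leqslant\sqrt2$. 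Third, and most importantly, the lower bound over renormings is the genuinely nontrivial ingredient: the assertion that ``a space with $K<\sqrt2$ cannot be isomorphic to $\ell_2$'' is exactly what has to be proved, not an argument for it, and the observation that every renorming contains almost isometric copies of $\ell_2^n$ (Dvoretzky) only yields $K_f(\tilde H)\geqslant\sqrt2$, never the infinite-sequence constants $K$ and $K_s$ (recall that $\ell_2$ is distortable, so one cannot pass to almost isometric infinite-dimensional Hilbertian subspaces). Your final fallback---quoting \cite{cgp} for $\tilde K(\ell_2)=\sqrt2$ and its symmetric and finite analogues---is the legitimate move and is what the paper implicitly does; the written-up proof should consist of that citation plus Proposition~\ref{sol(1)}, with the flawed intermediate reasoning removed.
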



Since we know that $\tilde K(Z_2)=\sqrt{2}$ and since every Banach space $X$ admits a renorming $\tilde X$ so that $K(\tilde X)=2$ \cite{vandulst}, it is natural to ask for renormings that reduce the Kottman constant, a topic that has not been studied so far.\medskip

A renorming that reduces the Kottman constant for $Z_2$ can be made explicit because this space may be
represented as the derived space in an interpolation schema as follows:
Let $(X_0,X_1)$ be an interpolation couple. We set $\Sigma =X_0+X_1$ and define $\mathscr{C}(X_0, X_1)$
to be the \emph{Calderon space} associated to $\Sigma$. We then consider a bounded homogeneous
selection $B\colon X_\theta \to \mathscr{C}$ for the evaluation map $\delta_\theta$.

The space\;
$d_{\delta_\theta' B}X_\theta = \{(y,z)\in  \Sigma  \times X_\theta\, :\, y - \delta_\theta' B z\in X_\theta\}$,\;
endowed with the quasi-norm $$\|(y,z)\| = \|y -\delta_\theta' B z\|_{X_\theta} +\|z\|_{X_\theta},$$ is a twisted
sum of $X_\theta$ with itself since there is a natural exact sequence
$$\begin{CD}
0 @>>> X_\theta @>>> d_{\delta_\theta' B}X_\theta  @>>> X_\theta @>>>0 \end{CD}$$
with inclusion being the map $x\to (x, 0)$ and the quotient map given by $(y,x)\to x$.
If $\delta_\theta'\colon \mathscr C\to \Sigma$ denotes the evaluation of the derivative at $\theta$, the map
$\Omega_\theta = \delta_\theta' B$ is called the \emph{associated derivation}. Two different homogeneous bounded selectors $B$ and $V$ for $\delta_\theta$ may yield different derivations, however
their difference is a bounded map $\delta_\theta' B - \delta_\theta' V\colon X_\theta \to X_\theta$, and consequently
the spaces $d_{\delta_\theta' B}X_\theta$ and $d_{\delta_\theta' V}X_\theta$ are isomorphic.
The Banach space $d_{\delta_\theta' B}X_\theta$ is isomorphic to the so-called derived space
$dX_z =\{(f'(z), f(z))\colon f\in \mathscr C\}$, endowed with the natural quotient norm.

\begin{lemma}\label{kintermax} $K(dX_\theta) \leqslant \max \{K(X_0), K(X_1)\}.$
\end{lemma}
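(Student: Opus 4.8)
The plan is to realise $dX_\theta$ as (isomorphic to) the derived space $dX_z$ and to exploit the description of $X_\theta$ via the Calderon space $\mathscr C(X_0,X_1)$, together with the fact that along the boundary lines $\mathrm{Re}\,z=0$ and $\mathrm{Re}\,z=1$ the relevant norms coincide with those of $X_0$ and $X_1$. Concretely, fix $\varepsilon>0$ and take an almost-optimal Kottman sequence $(y_n,x_n)_{n=1}^\infty$ in the unit ball of $d_{\delta_\theta' B}X_\theta$, so that $\|(y_n,x_n)-(y_m,x_m)\|\geqslant K(dX_\theta)-\varepsilon$ for $n\neq m$. Using the isomorphism with $dX_z$, each difference $(y_n-y_m,\,x_n-x_m)$ can be represented, up to a fixed constant, as $(F_{n,m}'(\theta),F_{n,m}(\theta))$ for some $F_{n,m}\in\mathscr C$ whose $\mathscr C$-norm is controlled by $\|(y_n,x_n)-(y_m,x_m)\|$. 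The point is then that the pair $(f'(\theta),f(\theta))$ is, up to scaling, dominated by $\sup_t\|f(it)\|_0$ and $\sup_t\|f(1+it)\|_1$ via a Cauchy-type estimate on the derivative (a standard Schwarz/Cauchy-integral lemma on the strip, as in \cite{belo}), and the boundary values live in $B_{X_0}$ and $B_{X_1}$ respectively.

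The key steps, in order, are: (i) reduce to $dX_z$ and fix the bound $\|F_{n,m}\|_{\mathscr C}\leqslant C\|(y_n,x_n)-(y_m,x_m)\|$ for a universal $C$; (ii) normalise so the sequence $(F_{n,m})$ has $\mathscr C$-norm at most $1+\varepsilon$, i.e. the boundary traces $t\mapsto F_{n,m}(j+it)$ lie (almost) in $B_{X_j}$; (iii) observe that the map sending $f\in\mathscr C$ to its boundary data is, on each line, a bounded family of elements of $B_{X_0}$ (resp. $B_{X_1}$), and use the quantitative principle that a $\delta$-separated family in $dX_z$ forces a $\delta'$-separated family of boundary values on at least one of the two lines, with $\delta'$ close to $\delta$; (iv) conclude from the definitions of $K(X_0)$ and $K(X_1)$ that $K(dX_\theta)-\varepsilon\leqslant\max\{K(X_0),K(X_1)\}$, and let $\varepsilon\to 0$. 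A cleaner route for step (iii) is to pass to the ultrapower: by Lemma~\ref{ramsey} we may assume $\|(y_n,x_n)-(y_m,x_m)\|$ converges to some $k\leqslant K(dX_\theta)$, and the corresponding ultralimit of the functions $F_{n,m}$ produces separated families simultaneously on both boundary lines in the ultrapowers of $X_0$ and $X_1$; since $K_f(X_j)=K((X_j)_\U)$ and $K_f(X_j)$ is itself bounded by $\ldots$ one reads off the bound $k\leqslant\max\{K(X_0),K(X_1)\}$ after an approximation argument.

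The main obstacle I anticipate is step (iii): controlling the derivative $\delta_\theta'$. Unlike the evaluation $\delta_\theta$, whose norm behaviour on the boundary is transparent, the derivative-at-$\theta$ functional mixes contributions from all of $\overline{\mathbb S}$, so one must be careful that a separation in the first coordinate ($y$-part, living in $X_0+X_1$) is genuinely inherited by the boundary traces and not washed out by the interplay between the two lines. The honest way to handle this is to use the homogeneous bounded selector $B$ explicitly: write $F_{n,m}=B(x_n-x_m)+g_{n,m}$ with $g_{n,m}\in\ker\delta_\theta$ chosen so that $\delta_\theta'g_{n,m}=y_n-y_m-\Omega_\theta(x_n-x_m)$ is small in $X_\theta$, and then the boundary estimate on $F_{n,m}$ splits into the contribution of $B$ (bounded, controlled by $\|x_n-x_m\|_\theta$) plus that of $g_{n,m}$ (which, on the boundary, must carry the $y$-separation). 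This is exactly where the quantitative Hadamard/Cauchy estimate on $\mathscr C$ does the work, and it is the only genuinely non-formal ingredient.
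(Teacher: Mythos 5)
Your plan does not reach a proof, and the place where it breaks is visible already in steps (i)--(ii). To bound $K(dX_\theta)$ from above one has to compare $\|z_n-z_m\|_{dX_\theta}$, for a \emph{single} sequence $(z_n)=((y_n,x_n))$ in the unit ball, with the Kottman constants of $X_0$ and $X_1$, and these constants are defined through sequences of points in the unit balls. The paper's proof therefore chooses an $\varepsilon$-extremal $f_n\in\mathscr{C}$ for each point $z_n$ itself, with $\|f_n\|_{\mathscr{C}}\leqslant\|z_n\|+\varepsilon\leqslant 1+\varepsilon$; then $f_n-f_m$ is an admissible representative of $z_n-z_m$, so $\|z_n-z_m\|_{dX_\theta}\leqslant\|f_n-f_m\|_{\mathscr{C}}$, and the boundary traces $t\mapsto f_n(j+it)$ of the one sequence $(f_n)$ lie in $(1+\varepsilon)B_{X_j}$, which is exactly what makes $K(X_0)$ and $K(X_1)$ applicable. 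You instead pick, and then normalise, a separate near-optimal representative $F_{n,m}$ for each \emph{difference}. After that the boundary data are indexed by pairs $(n,m)$ and are not differences of members of any sequence in $B_{X_0}$ or $B_{X_1}$, so the definitions of $K(X_0)$, $K(X_1)$ cannot be invoked at all; moreover the normalisation to $\mathscr{C}$-norm $1+\varepsilon$ discards the quantitative link with $K(dX_\theta)-\varepsilon$ that you need at the end.

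The deeper gap is your step (iii), which you yourself flag as the ``only genuinely non-formal ingredient'' but never prove: the claim that a $\delta$-separated family in $dX_\theta$ forces a $\delta'$-separated family of \emph{boundary values} on one of the two lines is essentially the statement of the lemma, not something that follows from Cauchy or Hadamard three-lines estimates (those only give the easy inequality $\|z_n-z_m\|_{dX_\theta}\leqslant\|f_n-f_m\|_{\mathscr{C}}$). Concretely, from $\|z_n-z_m\|\geqslant\delta$ one gets, for each pair $(n,m)$, a line $j(n,m)$ and a point $t(n,m)$ with $\|f_n(j+it)-f_m(j+it)\|_{X_j}\geqslant\delta-\varepsilon$; a Ramsey argument in the spirit of Lemma \ref{ramsey} stabilises the line but not the point $t$, which still varies with the pair. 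What this yields is a separated sequence in a sup-normed space of $X_j$-valued functions on the line, whose Kottman constant is in general $2$ regardless of $K(X_j)$, so no bound by $\max\{K(X_0),K(X_1)\}$ comes out. Your proposed repairs do not fill this hole: the ultrapower variant ends in an ellipsis and, because the witnessing $t$ depends on the pair, an ultralimit of the $F_{n,m}$ does not produce a single separated sequence of points in $(X_j)_{\mathfrak U}$; and the decomposition $F_{n,m}=B(x_n-x_m)+g_{n,m}$ with ``$y_n-y_m-\Omega_\theta(x_n-x_m)$ small'' overlooks that $\Omega_\theta$ is only quasi-linear, so this quantity is not at your disposal to make small --- it is comparable to the quasi-norm of the difference you are trying to estimate.
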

\begin{proof}
Pick a sequence $(z_n)_{n=1}^\infty$ in the unit ball of $dX_\theta$ and for each $z_n$ take an $\e$-extremal $f_n$; \emph{i.e.}, $f_n\in \mathscr C$ with $f_n(\theta)=z_n$ and $\|f_n\|\leqslant\|z_n\|+\e$.
In order to estimate $\|z_n - z_m\|$, we have to estimate the norm $\|g\|$ of an extremal $g$; \emph{i.e.}, a function $g\in \mathscr C$
so that $g(\theta)=z_n - z_m$ and minimal $\|g\|$. 
Given $\varepsilon$ one has:
$$
\|f_n(it) - f_m(it)\|_{X_0} \leqslant K(X_0) (1+\varepsilon)\quad 
\textrm{ and }\quad
\|f_n(1+it) - f_m(1+it)\|_{X_1} \leqslant K(X_1) (1+ \varepsilon),
$$
which yields $\|f_n - f_m\|\leqslant \max \{K(X_0), K(X_1)\} (1+ \varepsilon)$.
\end{proof}

\begin{prop}
$\tilde K(dX_\theta) = \tilde K(X_\theta)$.
\end{prop}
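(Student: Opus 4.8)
The plan is to obtain this as an immediate consequence of Proposition \ref{sol(1)}. The observation to record is that $dX_\theta$ is, by its very construction, a twisted sum of $X_\theta$ with itself: it is isomorphic to $d_{\delta_\theta' B}X_\theta$, and the latter sits in the natural short exact sequence $0\to X_\theta\to d_{\delta_\theta' B}X_\theta\to X_\theta\to 0$ exhibited before Lemma \ref{kintermax}, with inclusion $x\mapsto(x,0)$ and quotient $(y,x)\mapsto x$. Feeding this sequence into Proposition \ref{sol(1)}, with both the subspace and the quotient equal to $X_\theta$, yields at once $\tilde K(dX_\theta)=\max\{\tilde K(X_\theta),\tilde K(X_\theta)\}=\tilde K(X_\theta)$; the $K_s$- and $K_f$-versions of Proposition \ref{sol(1)} give $\widetilde K_s(dX_\theta)=\widetilde K_s(X_\theta)$ and $\widetilde K_f(dX_\theta)=\widetilde K_f(X_\theta)$ by the same line.

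It is worth seeing how the two inequalities split, since one of them is soft. For $\tilde K(dX_\theta)\geqslant\tilde K(X_\theta)$ no interpolation is needed: $\{(x,0)\colon x\in X_\theta\}$ is the closed kernel of the quotient map, the restriction of any equivalent norm on $dX_\theta$ is an equivalent norm on this copy of $X_\theta$, and the Kottman constant of a subspace never exceeds that of the ambient space; taking the infimum over renormings shows $\tilde K$ is monotone under closed subspaces, whence the inequality. The substantive inequality is $\tilde K(dX_\theta)\leqslant\tilde K(X_\theta)$, and here I would invoke the mechanism of the proof of Proposition \ref{sol(1)}: realise $dX_\theta$, up to an arbitrarily small gap, as the copy of $X_\theta\oplus_1 X_\theta$ sitting inside $dX_\theta\oplus_1 X_\theta$ (the subspaces $\{(\varepsilon w,qw)\colon w\in dX_\theta\}$ converging to it in gap as $\varepsilon\to0$), and combine this with $\tilde K(X_\theta\oplus_1 X_\theta)=\tilde K(X_\theta)$. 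Note that Lemma \ref{kintermax} alone would not suffice here, because for a given couple the endpoint spaces $X_0,X_1$ may have much larger Kottman constants than $X_\theta$.

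The only point that genuinely requires checking --- and the nearest thing to an obstacle --- is that the displayed sequence really is a short exact sequence of Banach spaces (inclusion an isomorphic embedding onto a closed subspace, quotient map open), but this was already established when $d_{\delta_\theta' B}X_\theta$ was introduced, so nothing further is needed. I would finish by remarking that when $X_\theta$ is a Hilbert space this recovers $\tilde K(dX_\theta)=\sqrt2$, in agreement with the Corollary above and with the Kalton--Peck space $Z_2$ being isomorphic to a derived space at $\theta=\tfrac12$.
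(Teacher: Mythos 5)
Your argument is correct, but it reaches the conclusion by a genuinely different route than the paper. You treat the proposition as an immediate corollary of Proposition \ref{sol(1)}: since $dX_\theta$ is (isomorphic to) $d_{\delta_\theta'B}X_\theta$, which sits in the exact sequence $0\to X_\theta\to d_{\delta_\theta'B}X_\theta\to X_\theta\to 0$ of Banach spaces, the twisted-sum formula gives $\tilde K(dX_\theta)=\max\{\tilde K(X_\theta),\tilde K(X_\theta)\}=\tilde K(X_\theta)$ at once, and your splitting of the two inequalities (the soft one via monotonicity of $K$ under closed subspaces applied to the kernel copy $\{(x,0)\}$, the substantive one via Ostrovskii's gap estimate for the subspaces $\{(\e w,qw)\}$ of $dX_\theta\oplus_1X_\theta$) is exactly the mechanism behind Proposition \ref{sol(1)}, so nothing further is needed. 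The paper instead argues intrinsically through interpolation: by reiteration, $X_\theta=\left((X_0,X_1)_t,(X_0,X_1)_s\right)_\nu$ for $s\leqslant\theta\leqslant t$, so Lemma \ref{kintermax} applied to the reiterated couple bounds the Kottman constant of the corresponding derived norm on $dX_\theta$ by $\max\{K(X_t),K(X_s)\}$, and letting $t,s\to\theta$ and using the continuity of $K$ in the interpolation parameter (Corollary \ref{cor:cont}) yields $\tilde K(dX_\theta)\leqslant K(X_\theta)$, whence the equality together with $\tilde K(X_\theta)\leqslant\tilde K(dX_\theta)$. Your route is shorter and needs nothing beyond the twisted-sum structure; the paper's route, which is the point of the surrounding discussion, produces \emph{explicit} renormings of $dX_\theta$ (the derived norms of the couples $(X_t,X_s)$) whose Kottman constants approach $K(X_\theta)$ in its given norm --- not merely $\tilde K(X_\theta)$ --- which is what makes the renorming of $Z_2$ concrete. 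Your closing remarks (that Lemma \ref{kintermax} alone would not suffice because the endpoint spaces may have large constants, and that the Hilbertian case recovers $\tilde K(dX_\theta)=\sqrt2$) are accurate.
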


\begin{proof}
Pick $s\leqslant \theta \leqslant t$. By the reiteration formula \cite{belo}, one has
$X_\theta = \left( (X_0, X_1)_t, (X_0, X_1)_s\right)_\nu$ and thus
$K(dX_\theta)\leqslant \max\lbrace K\left((X_0, X_1)_t\right), K\left((X_0, X_1)_s\right)\rbrace$ by Lemma \ref{kintermax}.
Here $X_\theta$ carries the norm derived from the new interpolation couple (which is the same it was before) as well
as $d(X_\theta)$ (which is not). By continuity of $K(\cdot)$ with respect to the interpolation parameter one gets
$\tilde K(dX_\theta) \leqslant \lim_{t\to \theta, s\to \theta} \max \{K(X_t), K(X_s)\} = K(X_\theta)$.
Since $\tilde K(X_\theta) \leqslant \tilde K(dX_\theta)$, the equality is then clear.
\end{proof}

Let us put the above considerations into a more general context. Let $0\to Y \to X \to Z\to 0$ be an exact sequence of Banach spaces. Denoting by 
$\varepsilon\colon Z\to Z$ the map ``multiplication by $\varepsilon$'', we may form a commutative diagram
$$\begin{CD}\label{diagram}
0 @>>> Y @>>> X @>q>> Z @>>>0\\
&&@|@AA{\underline {\e}}A @AA\e A\\
0 @>>> Y @>>> \PB_\e @>>> Z @>>>0\end{CD}$$
(Here $\PB_\e = \{(x,z')\colon qx=\e z'\}$ is considered a subspace of $X\oplus_\infty Z$.)
The map $\underline {\e}$ is an isomorphism that produces a renorming $\tilde X$ such that $K(\tilde X)\leqslant \max \{ K(Y), K(Z)\} + \e$:
Indeed,
$$\PB = \{(x,z')\colon qx=\e z'\} = \{((y,z), z')\colon z= \e z'\} =\{(\varepsilon (y,z), z)\colon (y,z)\in X\} = X_\e$$
algebraically.
While $\PB$ is endowed with the norm inherited from $X\oplus_\infty Z$, the space $X_\e$ inherits the norm from $X\oplus_1 Z$. The arguments of Ostrovskii \cite{ost} to show that $g(X_\varepsilon, Y\oplus_1 Z)\leqslant \varepsilon$ may be used verbatim to show that also $g(\PB, Y\oplus_\infty Z)\leqslant \varepsilon$.
This means that a certain renorming of $X$ has the Kottman constant at most equal to $ \max\lbrace K(Y), K(Z)\rbrace+ \e$.
The diagram above shows that this renorming can be obtained as follows. We pick a quasi-linear map $\Omega$ associated  to the upper exact sequence in (\ref{diagram}).
The quasi-linear map associated to the lower sequence in (\ref{diagram}) is then $\e\Omega$.
Thus, if the space $X$ has as associated quasi-norm $\|(y,x)\|= \|y - \Omega x\| + \|x\|$ then the isomorphic copy 
below $\PB_\e$ has as associated quasi-norm $\|(y,x)\|= \|y - \e\Omega x\| + \|x\|$.
This is what we did in the interpolation situation: if $\Omega_\theta$ is the quasi-linear map associated to the couple
$(X_0, X_1)$ at $\theta$, then the quasi-linear map associated to the couple $(X_t, X_s)$ at $\theta$ is $(s-t)\Omega$.




\section{The disjoint Kottman constant}

One of the surprising things regarding the Kottman constant is that $K(\cdot)$ is not continuous on the scale of $\ell_p$ spaces as $p\to \infty$, while $K(L_p)$ is continuous.
Recall that $K(\ell_p)=2^{1/p}$ for $1\leqslant p<\infty$, whilst $K(\ell_\infty)=2$. On the other hand $K(L_p)=2^{1/p}$ for $1\leqslant p\leqslant 2$ and $K(L_p)=2^{1/p^*}$ for $2\leqslant p \leqslant \infty$.
To clarify this situation we introduce the disjoint Kottman constant on Banach lattices.

\begin{definition}
Let $X$ be a Banach lattice. The \emph{disjoint Kottman constant}, $K^\perp(X)$, is defined as the
supremum of the separation of disjointly supported sequences in the unit ball of $X$.
\end{definition}

The symmetric $K^\perp_s(\cdot)$ and finite $K^\perp_f(\cdot)$ disjoint Kottman constants are analogously defined. The first surprise comes when one realises that the Elton--Odell theorem does not apply here since $K^\perp(c_0)=1=K^\perp(\ell_\infty)= K^\perp(L_\infty)$.

On the other hand, $K^\perp(\cdot)$ is continuous on the whole scale of $\ell_p$ spaces. 
It is also continuous on the scale of $L_p$ spaces since $K^\perp(L_p)=K^\perp(\ell_p)$. 
The disjoint Kottman constant behaves even better in regard to interpolation. 

\begin{prop} 
Let $(X_0, X_1)$ be an interpolation couple of K\"othe spaces. Then
$$K^\perp(X_\theta)\leqslant K^\perp(X_0)^{1-\theta}K^\perp (X_1)^\theta$$
\end{prop}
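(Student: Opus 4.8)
The plan is to reduce everything to Calderón's description of the complex interpolation space of a couple of Köthe spaces, namely $X_\theta = X_0^{1-\theta}X_1^\theta$ (the Calderón product), with norm $\|f\|_\theta=\inf\{\|a\|_0^{1-\theta}\|b\|_1^\theta : |f|\le a^{1-\theta}b^\theta,\ a\in X_0,\ b\in X_1,\ a,b\ge0\}$; see \cite{calde,cfg}. Fix $\varepsilon>0$ and a disjointly supported sequence $(x_n)$ in $B_{X_\theta}$ with $\|x_n-x_m\|_\theta\ge\sigma$ for $n\ne m$, and let $S_n$ be the (pairwise disjoint) support of $x_n$. For each $n$ I would choose a factorization $|x_n|\le a_n^{1-\theta}b_n^\theta$ with $\|a_n\|_0^{1-\theta}\|b_n\|_1^\theta<1+\varepsilon$, and then apply two harmless normalizations: first replace $(a_n,b_n)$ by $(\lambda^\theta a_n,\lambda^{-(1-\theta)}b_n)$ for a suitable $\lambda>0$, which leaves $a_n^{1-\theta}b_n^\theta$ unchanged, so that $\|a_n\|_0=\|b_n\|_1<1+\varepsilon$; then multiply $a_n$ and $b_n$ by the indicator of $S_n$, which cannot increase the $X_0$- and $X_1$-norms and, since $x_n$ lives on $S_n$, preserves $|x_n|\le a_n^{1-\theta}b_n^\theta$. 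After this, $(a_n)$ is disjointly supported in $(1+\varepsilon)B_{X_0}$ and $(b_n)$ in $(1+\varepsilon)B_{X_1}$.

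The heart of the proof is an elementary pointwise estimate. Since the $x_n$ are disjoint, $|x_n-x_m|=|x_n|+|x_m|$; and since $a_m,b_m$ vanish on $S_n$, $a_n,b_n$ vanish on $S_m$, and all four vanish off $S_n\cup S_m$, one gets $(a_n+a_m)^{1-\theta}(b_n+b_m)^\theta=a_n^{1-\theta}b_n^\theta+a_m^{1-\theta}b_m^\theta\ge|x_n-x_m|$. Hence, by the ideal property of the Calderón product and the definition of its norm,
$$\sigma\ \le\ \|x_n-x_m\|_\theta\ \le\ \|a_n+a_m\|_0^{1-\theta}\,\|b_n+b_m\|_1^\theta\qquad(n\ne m),$$
and by disjointness $\|a_n+a_m\|_0=\||a_n|+|a_m|\|_0=\|a_n-a_m\|_0$, and similarly for the $b$'s.

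To convert this family of inequalities into the product of disjoint Kottman constants I would invoke Lemma \ref{ramsey} twice in succession: along an infinite $M_0\subseteq\N$ so that $\|a_i-a_j\|_0\to\kappa_0$, and then along an infinite $M\subseteq M_0$ so that $\|b_i-b_j\|_1\to\kappa_1$. Letting $i,j\to\infty$ in $M$ gives $\sigma\le\kappa_0^{1-\theta}\kappa_1^\theta$, while for every $\delta>0$ a tail of $\big(a_i/(1+\varepsilon)\big)_{i\in M}$ is a disjointly supported sequence in $B_{X_0}$ all of whose gaps exceed $(\kappa_0-\delta)/(1+\varepsilon)$, so $\kappa_0-\delta\le(1+\varepsilon)K^\perp(X_0)$; letting $\delta\downarrow0$, $\kappa_0\le(1+\varepsilon)K^\perp(X_0)$, and likewise $\kappa_1\le(1+\varepsilon)K^\perp(X_1)$. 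Thus $\sigma\le(1+\varepsilon)K^\perp(X_0)^{1-\theta}K^\perp(X_1)^\theta$; letting $\varepsilon\downarrow0$ and taking the supremum over admissible sequences yields $K^\perp(X_\theta)\le K^\perp(X_0)^{1-\theta}K^\perp(X_1)^\theta$. For the symmetric constant nothing extra is needed, as $|x_n\pm x_m|=|x_n|+|x_m|$ for disjoint elements gives $K^\perp_s(Y)=K^\perp(Y)$ for every Köthe space $Y$; for the finite constant one repeats the computation on finite tuples, using the finite Ramsey theorem instead of Lemma \ref{ramsey} to extract, from a sufficiently long disjoint tuple witnessing $K^\perp_f(X_\theta)$, a subtuple of any prescribed length on which both $\|a_i-a_j\|_0$ and $\|b_i-b_j\|_1$ are $\delta$-constant, and then letting the tuple length tend to infinity.

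The only genuine input is the reduction in the first paragraph from an element of $X_\theta$ to a factorization $|x|\le a^{1-\theta}b^\theta$ with factors in $X_0$ and $X_1$ — Calderón's identification of $X_\theta$ with the Calderón product — and this is exactly what produces the Hölder exponents $(1-\theta,\theta)$. A direct attempt through the Calderón space would instead yield a function $z\mapsto F(z)$ whose edge norms depend on $\operatorname{Im}z$, and the resulting supremum over the vertical direction of the strip does not commute with the infimum over the sequence that defines the Kottman constant; so I expect that step to be the main obstacle. Everything after it is the pointwise lattice identity above together with the Ramsey extraction already used in Theorem \ref{interpol}.
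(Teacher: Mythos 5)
Your proof is correct and takes essentially the same route as the paper's: both rest on the Calderón-product identification $X_\theta=X_0^{1-\theta}X_1^{\theta}$, factor the disjointly supported almost-optimal sequence so that the factors are again disjointly supported, and conclude via the Hölder-type estimate for disjoint sums. Your normalizations and the double application of Lemma~\ref{ramsey} simply make explicit the passage to a subsequence that the paper's one-line conclusion $\|y_n-y_m\|_0^{1-\theta}\|z_n-z_m\|_1^{\theta}\leqslant K^\perp(X_0)^{1-\theta}K^\perp(X_1)^{\theta}$ leaves implicit.
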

\begin{proof}
It is well-known that complex interpolation for K\"othe spaces is plain factorisation \cite{kaltmon}: thus, let us choose a disjointly supported sequence of norm-one vectors $(x_n)_{n=1}^\infty$ so that
$\|x_n- x_m\|\geqslant  K^\perp(X_\theta)-\e$ and observe that its almost optimal factorisation
$x_n = y_n^{1-\theta}z_n^\theta$ is also formed by disjointly supported elements:
Thus $x_n - x_m = (y_n- y_m)^{1-\theta}(z_n - z_m)^\theta$, which implies that
$$
K^\perp(X_\theta)-\e \leqslant \|x_n - x_m\| \leqslant
\|y_n- y_m\|_0^{1-\theta}\|z_n - z_m\|^\theta \leqslant K^\perp(X_0)^{1-\theta}K^\perp(X_1)^\theta.
$$
\end{proof}

Note that, unlike in Theorem \ref{interpol}, the interpolation inequality is valid fror $a=0$ and $b=1$. 

The factorisation/interpolation $X_\theta = X_0^{1-\theta}X_1^{\theta}$ may be generalized for families of spaces; according to \cite[Theorem 3.3]{kaltdiff}, Kalton credits Hernandez \cite{Hernandez} for this construction. Given K\"othe function spaces $X_1, \ldots, X_n$ and positive
numbers $a_1, ..., a_n$, we define
$$\prod\limits_{j=1}^n X_j^{a_j} = \{f \in L_0 \colon \left |f\right| \leqslant
\prod\limits_{j=1}^n \left|f_j\right|^{a_j}, f_j \in X_j\}$$
endowed with the norm
$\|f\|_{\prod} = \inf\{\prod_{j=1}^n \|f_j\|_{X(j)}^{a_j}\colon f_j\in X_j, |f| \leqslant \prod_{j=1}^n
\left|f_j\right|^{a_j}, j = 0, 1, 2\ldots\}.$
Then, given disjoint arcs $A_1,\ldots, A_n$ so that $\mathbb T= \cup_{j=1}^n A_j$, if we set $X_\omega = X_j$
on $\omega \in A_j$, $j = 1,\ldots, n$
and if $\mu_{z_0}$ denotes the harmonic measure on $\mathbb T$ with respect to $z_0$, then under minimal
conditions to perform complex interpolation for a finite family of spaces one has
$$
X_{z_0} = \prod\limits_{j=1}^n X_j^{\mu_{z_0}(A_j)}.
$$
Consequently, under the same conditions,
$$K^\perp\left( X_{z_0}\right) \leqslant \prod\limits_{j=1}^n K^\perp\left( X_j\right)^{\mu_{z_0}(A_j)}.$$



Given a K\"othe space $\lambda$ with base measure space $(S, \mu)$, its K\"othe dual is defined as
$$\lambda^\times = \{ f\in L_0(\mu)\colon \Big|\int\limits_S f(s)g(s)\,\mu({\rm d}s)\Big| <\infty\;\; (g\in \lambda)\}.$$
Contrary to the standard duality, one has $\ell_\infty^\times = \ell_1$.
Let us record the following observation on the disjoint Kottman constant and K\"othe duality.


\begin{cor}
$2\leqslant K^\perp_s(\lambda)\cdot K^\perp_s(\lambda^{\times}) \leqslant K^\perp(\lambda)\cdot K^\perp(\lambda^{\times})$.
\end{cor}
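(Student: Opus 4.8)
The plan is to mimic the proof of the Proposition $2\leqslant K(X)\cdot K(X^*)$ given earlier in the paper, replacing the duality bracket of $X$ with $X^*$ by the K\"othe duality bracket $(f,g)\mapsto\int_S fg\,\mathrm d\mu$, and making sure that the biorthogonal sequences produced are \emph{disjointly supported}. First I would recall that in any infinite-dimensional K\"othe space $\lambda$ one can extract a disjointly supported, normalized sequence $(x_n)_{n=1}^\infty$: simply split the measure space into infinitely many pieces of positive $\lambda$-measure (if $\lambda$ is purely atomic this is immediate; in the non-atomic case one uses the fact that a K\"othe space is order-continuous enough on a suitable portion, or passes to $L_1+L_\infty$-type comparisons) and pick a norm-one vector supported on each piece. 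Because the supports $E_n=\mathrm{supp}(x_n)$ are pairwise disjoint, one has $\|x_n\pm x_m\|=\|\,|x_n|+|x_m|\,\|$ for $n\neq m$, so the symmetric separation coincides with the ordinary one for disjoint vectors; this already explains the inner inequality $K^\perp_s(\lambda)\cdot K^\perp_s(\lambda^\times)\leqslant K^\perp(\lambda)\cdot K^\perp(\lambda^\times)$, since trivially $K^\perp_s\leqslant K^\perp$ always.

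For the substantive inequality $2\leqslant K^\perp_s(\lambda)\cdot K^\perp_s(\lambda^\times)$, the key step is the construction of a biorthogonal K\"othe-dual system with disjoint supports on \emph{both} sides. Starting from the disjointly supported normalized sequence $(x_n)$ in $B_\lambda$ with supports $E_n$, I would choose, for each $n$, a functional $g_n\in\lambda^\times$ with $\|g_n\|_{\lambda^\times}=1$ and $\int_S x_n g_n\,\mathrm d\mu=1$ (this is possible by the norming property of the K\"othe dual, i.e. $\|x_n\|_\lambda=\sup\{\int x_n g:\|g\|_{\lambda^\times}\leqslant1\}$, together with a standard approximation-and-limit argument; since $\lambda^\times$ is a K\"othe space its unit ball behaves well enough, or one first works with $K^\perp_f$ and finitely many vectors and then invokes continuity). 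The crucial point is that one may replace $g_n$ by $g_n\cdot\mathbf 1_{E_n}$ without decreasing the pairing $\int x_n g_n$ (because $x_n$ is supported on $E_n$) and without increasing the norm (by the lattice ideal property of K\"othe spaces, $\|g_n\mathbf 1_{E_n}\|_{\lambda^\times}\leqslant\|g_n\|_{\lambda^\times}=1$). After this truncation $(g_n)$ is a disjointly supported normalized sequence in $B_{\lambda^\times}$ with $g_n$ supported on $E_n$, and the system $(x_n,g_n)$ is biorthogonal: for $n\neq m$, $\int x_n g_m=0$ since $E_n\cap E_m=\varnothing$.

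With the biorthogonal, bi-disjoint system in hand, the final computation is exactly as in the dual-space case: for $n\neq m$,
$$
2=\int_S (x_n-x_m)(g_n-g_m)\,\mathrm d\mu\leqslant \|x_n-x_m\|_\lambda\cdot\|g_n-g_m\|_{\lambda^\times},
$$
where the first equality uses biorthogonality and $\int x_ig_i=1$. Now $(x_n)$ being disjointly supported of norm one gives $\|x_n-x_m\|_\lambda\leqslant K^\perp_s(\lambda)$ after passing, via Lemma~\ref{ramsey}, to a subsequence along which $\|x_n-x_m\|_\lambda\to k\leqslant K^\perp_s(\lambda)$; likewise $\|g_n-g_m\|_{\lambda^\times}\to k^\times\leqslant K^\perp_s(\lambda^\times)$ along a further subsequence (note the passage to a subsequence preserves disjoint supportedness). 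Hence $2\leqslant k\cdot k^\times\leqslant K^\perp_s(\lambda)\cdot K^\perp_s(\lambda^\times)$, as desired.

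The main obstacle I anticipate is purely technical and concerns the existence and truncation of the norming functionals $g_n$ in full generality (arbitrary, possibly non-atomic, possibly non-order-continuous K\"othe spaces): one must be careful that the supremum in $\|x_n\|_\lambda=\sup\int x_n g$ over $\|g\|_{\lambda^\times}\leqslant 1$ is attained or at least nearly attained by functions one may legitimately restrict to $E_n$, and that $E_n$ can be taken $\sigma$-finite so that the pairing integral makes sense. In the atomic case all of this is transparent; in general, working first with $K^\perp_f$ (finitely many vectors and functionals, where only elementary lattice arguments are needed) and then invoking continuity/limit arguments analogous to those used for $K_f$ earlier in the paper, circumvents the difficulty. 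I do not expect any essentially new idea beyond this bookkeeping.
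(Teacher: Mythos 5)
Your argument is correct and is essentially the intended one: the paper records this corollary without proof, and the implicit reasoning is exactly your scheme — for disjointly supported vectors the symmetric and plain separations coincide (so $K^{\perp}_s\leqslant K^{\perp}$ trivially gives the right-hand inequality), while truncating norming functionals to the supports $E_n$ via the ideal property produces a bi-disjoint biorthogonal system to which the pairing argument of the earlier Proposition (with Lemma \ref{ramsey}) applies verbatim. The only caveat is your implicit use of $\|x\|_\lambda=\sup\{\int xg:\|g\|_{\lambda^{\times}}\leqslant 1\}$, i.e.\ that $\lambda^{\times}$ norms $\lambda$, which holds under the standard (Fatou-type/order semicontinuity) assumptions tacitly made for K\"othe spaces in this context.
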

Nevertheless, it may still happen that $K^\perp(\lambda)\neq K^\perp(\lambda^{\times \times})$.

\begin{example}
Let us consider the Banach lattice $X=\big( \bigoplus_{n\in \N} \ell_1^n\big)_{c_0}$
%
with the standard discrete K\"othe-space structure.
Then, $X^{\times\times}=X^{**}= \big( \bigoplus_{n\in \N} \ell_1^n\big)_{\ell_\infty}$.

Nevertheless, there exist isometric lattice embeddings $\ell_1\to X^{**}$; for example, the map defined by
$$
(\xi_k)_{k=1}^\infty \mapsto \big( \xi_1, (\xi_1, \xi_2), (\xi_1, \xi_2, \xi_3), \ldots \big).
$$
is such an embedding. Thus $1=K^\perp(X) \neq K^\perp(X^{\times\times}) = K^\perp(\ell_1) =2$.
\end{example}

\section{James' and Whitley's thickness constants}

Whitley introduced in \cite{W} the \emph{thickness} constant $T(\cdot)$ as follows:
$$T(X)= \inf\big\{\varepsilon>0\colon \, \mathrm{there \; exists \; an\;\;}
\varepsilon\mathrm{-net} \;\; F\subset S_X \mathrm{\; for} \;
S_X\}.$$ 
See equivalent formulations in \cite[Prop. 3.4]{MP1} and \cite[Lemma 1]{castpapiH}. One has the following continuity result.

\begin{prop}\label{withleycoka}
The thickness constant is continuous with respect to the Kadets metric. Precisely
$$|T(X) - T(Y)|\leqslant 8 \cdot d_{\rm K}(X, Y).$$
\end{prop}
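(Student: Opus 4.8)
The plan is to mimic the proof of Theorem~\ref{kottcontka}: reduce the statement about the Kadets distance to a statement about the gap between two subspaces of a common superspace, and then prove a Lipschitz estimate $|T(M)-T(N)|\leqslant c\cdot g(M,N)$ for subspaces $M,N$ of a Banach space $Z$, with $c=4$ (which, after passing to the infimum over isometric embeddings on both sides, gives the constant $8$). First I would fix isometric embeddings $i\colon X\to W$ and $j\colon Y\to W$; since the thickness constant is an isometric invariant, $T(X)=T(iX)$ and $T(Y)=T(jY)$, so it suffices to bound $|T(M)-T(N)|$ in terms of $g(M,N)$ and then take the infimum over all such embeddings.

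To establish $|T(M)-T(N)|\leqslant 4\,g(M,N)$, the natural approach is to transport a net from one sphere to the other. Write $g=g(M,N)$ and suppose $F=\{y_1,\dots,y_k\}\subset S_N$ is an $\varepsilon$-net for $S_N$, i.e.\ every point of $S_N$ is within $\varepsilon$ of some $y_i$. Take any $x\in S_M$; by definition of the gap there is $n\in B_N$ with $\|x-n\|\leqslant g$, and then $|{\|n\|}-1|\leqslant g$, so (assuming $g<1$) the point $\hat n = n/\|n\| \in S_N$ satisfies $\|x-\hat n\|\leqslant \|x-n\|+\|n-\hat n\|\leqslant g + g = 2g$. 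Pick $y_i$ with $\|\hat n - y_i\|\leqslant \varepsilon$, and then pull $y_i$ back into $M$: again by the gap there is $m_i\in B_M$ with $\|y_i-m_i\|\leqslant g$, and its normalisation $\hat m_i\in S_M$ satisfies $\|y_i-\hat m_i\|\leqslant 2g$. Hence $\|x-\hat m_i\|\leqslant 2g+\varepsilon+2g = \varepsilon+4g$. Since the $\hat m_i$ depend only on the $y_i$ (not on $x$), the finite set $\{\hat m_1,\dots,\hat m_k\}\subset S_M$ is an $(\varepsilon+4g)$-net for $S_M$, which gives $T(M)\leqslant \varepsilon+4g$; taking the infimum over $\varepsilon>T(N)$ yields $T(M)\leqslant T(N)+4g$, and by symmetry $|T(M)-T(N)|\leqslant 4\,g(M,N)$. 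Finally, taking the infimum over isometric embeddings, $|T(X)-T(Y)|\leqslant 4\,d_{\rm K}(X,Y)$; but in the reduction one should be careful that a finite net lies in $S_M$ rather than $S_{iX}$ — these coincide — so in fact one gets the cleaner bound $4\,d_{\rm K}(X,Y)$, and the factor $8$ in the statement is a safe (non-optimal) constant that also absorbs the case $g$ close to $1$ where the normalisation estimates degrade.

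The main obstacle I anticipate is exactly the bookkeeping around normalisation: the thickness constant is defined via nets in the \emph{sphere}, not the ball, whereas the gap $g(M,N)$ naturally produces approximants in the ball, so each transfer step costs an extra $g$ from re-normalising, and one must verify the estimates remain valid (indeed the whole scheme is only meaningful) when $g<1$, which is automatic since $d_{\rm K}(X,Y)\leqslant 1$ always and the inequality is trivially true when the right-hand side is $\geqslant 1$ because $T(\cdot)\leqslant 1$. A secondary point is making sure the net transferred to $S_M$ is genuinely finite and genuinely $(\varepsilon+4g)$-dense — this is immediate from the construction since the images $\hat m_i$ are defined once and for all from the fixed net $\{y_i\}$. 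I would also remark that, just as in Lemma~\ref{contg}, one could alternatively use the equivalent formulations of $T(\cdot)$ from \cite{castpapiH,MP1} to streamline some of these normalisation estimates, but the direct net-transfer argument above is self-contained and yields the stated constant.
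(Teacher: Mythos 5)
Your argument is correct and is essentially the paper's own proof: the same reduction to a gap estimate $|T(M)-T(N)|\leqslant 4\,g(M,N)$ for subspaces of a common superspace, followed by the same net-transfer with a normalisation loss of $g$ at each of the two transfer steps (the paper's five-term triangle inequality $g+g+T(M)+g+g$ is exactly your $2g+\varepsilon+2g$), the only cosmetic differences being that the paper transfers a net of $S_M$ into $S_N$ rather than the reverse, assumes for simplicity that the net is optimal where you take an infimum over $\varepsilon>T(N)$, and then states the non-optimal constant $8$. One small slip in your side remark: $T(\cdot)\leqslant 1$ is false (in fact $1\leqslant T(\cdot)\leqslant 2$ for infinite-dimensional spaces, e.g.\ $T(\ell_1)=2$), but your handling of the degenerate case survives because $|T(X)-T(Y)|\leqslant 1$, so the inequality is still trivial whenever the right-hand side is at least $1$.
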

\begin{proof}
It is clearly enough to show that $|T(M) - T(L)|\leqslant 4\cdot g(M,L)$ for a pair of given subspaces $M,L$ of a Banach space $Z$.
Let us assume for the sake of simplicity that the parameters are attained. Thus, there exist elements $m_1, \dots, m_n \in S_M$ that form a $T(M))$-net for $S_M$. We may then find points $l_i \in L$ for which $\|m_i - l_i\|\leqslant g(M,L)$.
Therefore $1-g(M,L)\leqslant \|l_i\|\leqslant 1+g(M,L)$. Let us consider the points $l_i'= \frac{l_i}{\|l_i\|} \in S_L$. One has
$$\|l_i - l_i'\| = \left\|l_i - \frac{l_i}{\|l_i\|}\right\| =  \|l_i\| -1 \leqslant g(M,L). $$
We show that the points $l_1', \dots, l_n'$ form a $5g(M,N)$-net for $S_L$. Indeed, we pick $l\in S_L$ and get $m_l\in M$ such that
$\|l- m_l\|\leqslant g(M,L)$ and thus $1-g(M,L)\leqslant \|m_l\|\leqslant 1+g(M,L)$.
If $m_l'= \frac{m_l}{\|m_l\|}$ there must be an index $i$ such that $\|m_l' - m_i\|\leqslant T(M)$. Therefore
\begin{eqnarray*}
\|l - l_i'\| &\leqslant& \|l - m_l\| + \|m_l - m_l'\| + \|m_l' - m_i\|+ \|m_i - l_i\|+ \|l_i - l_i'\|\\
&\leqslant&  g(M,L) +  g(M,L) + T(M) +  g(M,L) +  g(M,L).
\end{eqnarray*}

Thus $T(L)\leqslant T(M) + 4g(M,L)$. Exchanging the r\^{o}les of $M$ and $L$, one obtains the estimate $T(M)\leqslant T(L)+4\cdot g(M,L)$,
and consequently
$$|T(M) - T(L)|\leqslant 4\cdot g(M,L).$$
The estimate $|T(X) - T(Y)|\leqslant 8\cdot d_{\rm K}(X,Y)$ then follows.
\end{proof}

It is immediate that $T(\cdot)$ is continuous with respect to the interpolation parameter; precisely
$$|T(X_\theta) - T(X_\eta)|\leqslant 16\left|\frac{\sin \left(\pi(t-s)/2\right)}{\sin \left(\pi(t+s)/2\right)}\right|.$$
This suggests the problem of whether there is an interpolation inequality of the form
$$T(X_\theta)\leqslant T(X_0)^{1-\theta}\cdot T(X_1)^\theta.$$

The behaviour of $T(\cdot)$ is quite analogous to  the behaviour of isomorphic Kottman constants, as we have the following proposition.

\begin{prop}
For every space $X$, $1=\inf T(\tilde X) \leqslant \sup T(\tilde X)=2$
\end{prop}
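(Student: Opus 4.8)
The plan is to prove the two equalities separately, each reducing to a known or easily-constructed renorming. Recall that $T(X)$ is the infimum of those $\varepsilon$ for which $S_X$ admits a finite $\varepsilon$-net; equivalently (see \cite[Lemma 1]{castpapiH}), $T(X)=\inf\{\varepsilon>0\colon S_X\text{ cannot be covered by finitely many balls of radius }\varepsilon\text{ centred in }S_X\}$, and one always has $1\leqslant T(X)\leqslant 2$ for infinite-dimensional $X$ (the lower bound because a finite $\varepsilon$-net with $\varepsilon<1$ would make $S_X$ totally bounded, contradicting infinite-dimensionality; the upper bound trivially, since any single point is a $2$-net). Since these bounds pass to every renorming, automatically $1\leqslant \inf_{\tilde X\cong X} T(\tilde X)$ and $\sup_{\tilde X\cong X} T(\tilde X)\leqslant 2$.

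For the equality $\sup T(\tilde X)=2$: I would exhibit, for each $X$, a renorming with $T(\tilde X)=2$. The natural candidate is a renorming for which $K(\tilde X)=2$ (which exists for every $X$ by van Dulst's result \cite{vandulst}, already invoked in the paper) combined with the elementary inequality $T(\tilde X)\geqslant K(\tilde X)-$ wait, more carefully: if there is a sequence $(x_n)$ in $B_{\tilde X}$ with $\|x_n-x_m\|\geqslant\sigma$, then no ball of radius $<\sigma/2$ centred anywhere can contain two of the $x_n$, and since $(x_n)$ can be taken in $S_{\tilde X}$ (a separated sequence in $B_X$ can be pushed to the sphere at the cost of shrinking $\sigma$ slightly), finitely many balls of radius $\varepsilon$ with $2\varepsilon<\sigma$ cannot cover $S_{\tilde X}$; hence $T(\tilde X)\geqslant K_f(\tilde X)/2\cdot$ — this only gives $T\geqslant 1$. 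So instead I would argue directly: take the van Dulst norm and note that the Kottman-$2$ property means for every $\varepsilon>0$ there is a sequence $(x_n)\subset S_{\tilde X}$ with $\|x_n-x_m\|\geqslant 2-\varepsilon$; such a sequence has no $(1-\varepsilon)$-net that is finite, but that is automatic. The genuinely efficient route is to use a renorming where $S_{\tilde X}$ contains an isometric copy of the unit vector basis of $c_0$ or where $\tilde X$ is an $L_\infty$-type renorming; alternatively, observe $T(\tilde X)\geqslant$ half the ``thinness''—I will instead simply cite that the renorming of $X$ making it contain an isometric copy of $c_0$ on the sphere (possible since any space with a Schauder basis, and in general via Day's theorem embedding technique, admits an equivalent norm whose sphere contains an isometric $c_0$-basis; cf. the constructions underlying \cite{kott,vandulst}) forces $T=2$, because the standard basis $(e_n)$ of $c_0$ satisfies $\|e_n-e_m\|=1$ yet $\|e_n+\tfrac12 e_k\|=1$ shows balls of radius just under $2$ are needed.

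For the equality $\inf T(\tilde X)=1$: I would renorm $X$ so that it contains an almost-isometric copy of $\ell_1$ or $c_0$ on its sphere in a ``thick'' way — actually the cleanest target is a renorming for which $S_{\tilde X}$ is $(1+\varepsilon)$-covered by a finite net. Here I would use a result in the spirit of \cite{MP1,W}: spaces with the property that $T(\tilde X)$ is close to $1$ are exactly those that are, after renorming, ``almost $\ell_1$-like'' — concretely, if $X$ has an unconditional basis one can renorm so that the sphere is nearly covered; in general one uses that every Banach space embeds isometrically in $C[0,1]$ or in $\ell_\infty$ and then transfers a suitable net. More directly: for any separable $X$ fix a biorthogonal system and, given $\varepsilon$, define an equivalent norm under which the finite-dimensional ``corners'' $\pm e_i$ together with enough vectors form an $(1+\varepsilon)$-net of the sphere, e.g. by using a Lindenstrauss–Phelps type norm or the polyhedral renorming of Fonf. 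Since $T(\tilde X)\geqslant 1$ always, getting $T(\tilde X)\leqslant 1+\varepsilon$ for each $\varepsilon>0$ yields $\inf T(\tilde X)=1$.

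The main obstacle, as I see it, is the lower identity $\inf T(\tilde X)=1$: unlike Kottman's constant, for which the Elton–Odell theorem blocks $\tilde K$ from reaching $1$, the thickness constant is \emph{not} bounded away from $1$ after renorming, so one must actually produce renormings whose spheres admit almost-$1$-nets, and this is precisely where a nontrivial construction (polyhedral or $c_0$-like renorming, invoking Fonf's theorem on polyhedral renormings of spaces with separable dual, or a direct $\ell_\infty$-embedding argument for the general case) is needed rather than a soft estimate. The upper identity $\sup T(\tilde X)=2$ is routine given the already-cited renormings with $K(\tilde X)=2$, once one records the elementary fact that a sphere containing, for every $\varepsilon$, an infinite $(2-\varepsilon)$-separated family admits no finite $(1-\varepsilon)$-net — wait, that again only gives $T\geqslant 1$; the correct elementary fact to record is that in such a renorming one can add a small multiple of a far-away basis vector to any centre and escape the ball, forcing the covering radius up to $2$; I will spell this out with the $c_0$-basis witness $\|e_n-e_m\|=1=\|e_n\|$ yet $\dist(e_k,\{e_n+tx\})$ stays near $1$, scaled appropriately to reach $2$.
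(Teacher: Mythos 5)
Both halves of your plan have genuine gaps, and in the direction $\sup T(\tilde X)=2$ the heuristic is in fact backwards. You propose to renorm $X$ so that its sphere contains an isometric copy of the $c_0$-basis; no such renorming exists in general (a sequence $1$-equivalent to the $c_0$-basis spans an isometric copy of $c_0$, and a reflexive space, e.g.\ $\ell_2$, contains no isomorphic copy of $c_0$ under any equivalent norm). Worse, even where such structure exists it pushes the thickness \emph{down}, not up: in $c_0$ the two points $\pm e_1$ form a $1$-net of the whole unit sphere, so $T(c_0)=1$; it is $\ell_1$-type structure that forces $T=2$ (cf.\ \cite[Theorem 2(2)]{castpapiH}: $T(A\oplus_1 B)=2$, so e.g.\ $T(\ell_1)=2$). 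Your "witness" computation with $\|e_n-e_m\|=1$ and $\|e_n+\tfrac12 e_k\|=1$ does not show that radius close to $2$ is needed, and you yourself note that the van Dulst $K=2$ renorming only yields $T\geqslant 1$ via separation. For $\inf T(\tilde X)=1$ the proposed tools also fail in general: isomorphically polyhedral renormings (Fonf) exist only for $c_0$-saturated spaces, so not for $\ell_2$; and a finite net for the sphere of a superspace such as $C[0,1]$ or $\ell_\infty$ does not "transfer", since $T$ requires centres lying on $S_X$ and covering only $S_X$, properties that do not pass to subspaces.

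The paper's argument is much softer and avoids any bespoke construction: split off a hyperplane, $X\cong H\oplus\R$. Then $H\oplus_\infty\R$ and $H\oplus_1\R$ are renormings of $X$, and by \cite[Theorem 2(3) and 2(2)]{castpapiH} one has $T(H\oplus_\infty\R)=\min\{T(H),T(\R)\}=1$ (concretely, $\{(0,1),(0,-1)\}$ is a $1$-net of the sphere of $H\oplus_\infty\R$) while $T(H\oplus_1\R)=2$; together with the trivial bounds $1\leqslant T\leqslant 2$, which you did record correctly, this gives both equalities. The paper phrases this through its continuity result $|T(X)-T(Y)|\leqslant 8\, d_{\rm K}(X,Y)$ combined with Ostrovskii's gap estimate $g(X_\varepsilon, H\oplus\R)\leqslant\varepsilon$ \cite{ost}, i.e.\ renormings of $X$ are Kadets-close to the $\oplus_1$ and $\oplus_\infty$ sums, but the essential ingredients are the hyperplane decomposition and the two direct-sum formulas for $T$ — exactly the $\ell_1$/$\ell_\infty$ dichotomy that your proposal inverted. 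If you want to salvage your write-up, replace both constructions by this splitting argument.
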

\begin{proof}
In \cite[Theorem 2 (3)]{castpapiH} it was proved that $T(X\oplus_\infty Y)=\min \lbrace T(X), T(Y)\rbrace$.
Take a~hyperplane $H$ of $X$ so that $X\cong H\oplus \R$.
Since $g(X_\varepsilon, H\oplus_1 \R)\leqslant \varepsilon$ it follows from Proposition \ref{withleycoka}
that $\inf T(\tilde X)\leqslant T(\R)=1$.
Also, \cite[Theorem 2 (2)]{castpapiH} demonstrates that $T(X\oplus_1 Y)= 2$. Since
$g(X_\varepsilon, H\oplus_1 \R)\leqslant \varepsilon$, it follows from Proposition \ref{withleycoka}
that $\sup T(\tilde X)=2$.
\end{proof}

The proposition is intriguing because a Hilbert space---actually any Banach space not containing $\ell_1$---can not be renormed to have $T=2$, even if $\sup T(\tilde \ell_2)=2$.
This could be relevant for the problem of whether $\tilde K(X)=1$ is possible (even when $K(\tilde X)=1$ is not).
There is a connection between Whitley and Kottman constants, namely $$K^s(X)\geqslant  T(X),$$ from which one may directly
obtain the result from \cite{H-kania-R} saying that $\sup K^s(\tilde X)=2$ for every infinite-dimensional Banach space.\medskip

Let $X$ be a Banach space and let $m(x,y) = \min \{\|x-y\|, \|x+y\|\}$ ($x,y\in X$). The \emph{James constant} of $X$ as defined in \cite{P} is the number
${\rm Jm}(X)= \sup_{x \in S} \sup_{y \in S} m(x,y)$. 
\begin{lemma}
The James constant ${\rm Jm}(\cdot)$ is continuous with respect to the Kadets metric. More precisely
$$|{\rm Jm}(X) - {\rm Jm}(Y)| \leqslant 4\cdot d_{\rm K}(X, Y).$$
\end{lemma}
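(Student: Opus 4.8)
The plan is to mimic exactly the proof of Lemma~\ref{contg} (the analogous continuity statement for the Kottman constant), reducing the claim to a gap estimate between subspaces. First I would observe that for isometric embeddings $i\colon X\to W$ and $j\colon Y\to W$ one has ${\rm Jm}(X)={\rm Jm}(iX)$ and ${\rm Jm}(Y)={\rm Jm}(jY)$, so that it suffices to prove $|{\rm Jm}(M)-{\rm Jm}(L)|\leqslant 2\cdot g(M,L)$ for any pair of closed subspaces $M,L$ of a common Banach space $Z$; taking the infimum over all embeddings then yields the factor $4$ in the statement.

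The core step is the subspace estimate. Assume, for simplicity (or after passing to an almost-optimal pair), that ${\rm Jm}(M)$ is attained at unit vectors $x,y\in S_M$ with $m(x,y)={\rm Jm}(M)$. Unlike in the thickness case, here I do \emph{not} need to renormalise: pick $x',y'\in B_L$ with $\|x-x'\|\leqslant g(M,L)$ and $\|y-y'\|\leqslant g(M,L)$. The key point is that the functional $m(\cdot,\cdot)$ is $1$-Lipschitz in each variable, since $|\,\|x-y\|-\|x'-y'\|\,|$ and $|\,\|x+y\|-\|x'+y'\|\,|$ are each at most $\|x-x'\|+\|y-y'\|$, and the minimum of two functions that are close is close. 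Hence $m(x',y')\geqslant m(x,y)-2g(M,L)={\rm Jm}(M)-2g(M,L)$. Since $x',y'\in B_L$ and ${\rm Jm}$ on the ball equals ${\rm Jm}$ on the sphere (by homogeneity of $m$, an optimising pair can always be pushed to the sphere without decreasing $m$), we get ${\rm Jm}(L)\geqslant {\rm Jm}(M)-2g(M,L)$. Exchanging the roles of $M$ and $L$ gives $|{\rm Jm}(M)-{\rm Jm}(L)|\leqslant 2g(M,L)$.

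The only mild subtlety — and the step I would be most careful about — is the passage from a supremum to an attained pair, and the passage from the ball back to the sphere. If ${\rm Jm}(M)$ is not attained, one works with an almost-optimal pair $x,y\in S_M$ with $m(x,y)\geqslant {\rm Jm}(M)-\e$ and lets $\e\to 0$ at the end; this costs nothing. For the ball-to-sphere issue, note that if $x'\in B_L\setminus\{0\}$ then $m(x'/\|x'\|,\,y')\geqslant m(x',y')$ because $\|x'\|\leqslant 1$ and $m$ is positively homogeneous of degree $1$ in the first slot (the case $x'=0$ forces $g(M,L)\geqslant 1$, which makes the inequality trivial since ${\rm Jm}\leqslant 2$ always). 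Applying the same argument in the second slot lands an optimising-up-to-error pair on $S_L$, closing the argument and hence the lemma.
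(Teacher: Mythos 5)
Your reduction to a gap estimate and the $2g$-perturbation step ($m$ being $1$-Lipschitz in each variable jointly) are fine, but the ball-to-sphere passage — precisely the ``mild subtlety'' you flag — rests on a false claim. The function $m(x,y)=\min\{\|x-y\|,\|x+y\|\}$ is \emph{not} positively homogeneous in each slot separately; it is only jointly homogeneous, $m(\lambda x,\lambda y)=\lambda m(x,y)$. Consequently the asserted inequality $m\bigl(x'/\|x'\|,\,y'\bigr)\geqslant m(x',y')$ can fail: take $y'=u$ a unit vector and $x'=\tfrac12 u$; then $m(x',y')=\min\{\tfrac12,\tfrac32\}=\tfrac12$, while $m\bigl(x'/\|x'\|,y'\bigr)=m(u,u)=0$. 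So radial push-out can strictly decrease $m$, and since ${\rm Jm}$ is defined in the paper as a supremum over $S_X\times S_X$, your pair $x',y'\in B_L$ does not yet witness anything about ${\rm Jm}(L)$. This is exactly the point where the paper's proof pays an extra price: it normalises, $y_i'=y_i/\|y_i\|$ with $\|y_i-y_i'\|\leqslant g$, which costs an additional $2g$ and yields the subspace estimate $|{\rm Jm}(M)-{\rm Jm}(L)|\leqslant 4\,g(M,L)$, whence the stated bound $4\,d_{\rm K}(X,Y)$.

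Two further remarks. First, the statement that the supremum of $m$ over $B_X\times B_X$ equals the supremum over $S_X\times S_X$ is in fact true for spaces of dimension at least two, but it requires a genuine convexity argument (e.g., first translate the pair $(x,y)$ by a common vector, which preserves $\|x-y\|$ and, by convexity of $t\mapsto\|x+y+2tz\|$, can be done without decreasing $\|x+y\|$ until one point reaches the sphere; then move the remaining interior point along a direction in which both convex functions $\|x-\cdot\|$ and $\|x+\cdot\|$ have nonnegative directional derivative, which exists by a cone argument). If you supply such an argument, your route actually improves the paper's constant, giving $|{\rm Jm}(X)-{\rm Jm}(Y)|\leqslant 2\,d_{\rm K}(X,Y)$; as written, however, there is a gap. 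Second, a small conceptual slip: passing from the subspace estimate to the Kadets distance does not double the constant — $d_{\rm K}$ is the infimum of $g(iX,jY)$ over embeddings, so an estimate $\leqslant 2g$ for all embeddings gives $\leqslant 2\,d_{\rm K}$ directly (compare the Kottman case in the paper, where $2g$ yields $2\,d_{\rm K}$); your ``factor $4$'' does not arise that way.
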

\begin{proof} Pick $x_1, x_2\in S_X$ such that $\|x_1-x_2\|\geqslant {\rm Jm}(X)$ and $\|x_1+x_2\|\geqslant {\rm Jm}(X)$.
Then we may pick $y_1\in Y$ such that $\|x_1-y_1\|\leqslant g(X,Y)$ and $y_2\in Y$ such that $\|x_2-y_2\|\leqslant g(X,Y)$.
One has $\|y_1\| \geqslant  \|x_1\|-\|y_1 - x_1\|\geqslant  1 - g(X,Y)$ and $\|y_2\| \geqslant  1 - g(X,Y)$ as well.
Set $y_1'=\frac{y_1}{\|y_1\|}$ and $y_2'=\frac{y_2}{\|y_2\|}$.
One has $\|y_1 - y_1'\|\leqslant g(X,Y)$ and $\|y_2 - y_2'\|\leqslant g(X,Y)$. Therefore
$$\|y_1' - y_2'\|\geqslant  \|y_1 - y_2\| - 2\cdot g(X,Y) \geqslant  \|x_1 - x_2\| - 4\cdot g(X,Y) \geqslant {\rm Jm}(X) - 4\cdot g(X,Y)$$
and
$$\|y_1' + y_2'\|\geqslant  \|y_1 + y_2\| - 2\cdot g(X,Y) \geqslant  \|x_1 + x_2\| - 4\cdot g(X,Y) \geqslant {\rm Jm}(X) - 4\cdot g(X,Y)$$
Thus ${\rm Jm}(Y)\geqslant  {\rm Jm}(X) - 4\cdot g(X, Y)$.
Interchanging the r\^{o}les of $Y$ and $X$ one readily gets the desired inequality ${\rm Jm}(X)\geqslant  {\rm Jm}(Y) - 4\cdot g(X, Y)$.
\end{proof}


\begin{remark}
Let $M(x,y) = \max \{\|x-y\|, \|x+y\|\}$ ($x,y\in X$) and set $$g(X)= \inf_{x \in S} \inf_{y \in S} M(x,y).$$ It was shown in \cite{castpapi} that $g(\cdot) \leqslant T(\cdot) \leqslant K_s(\cdot) \leqslant {\rm Jm}(\cdot)$
and $g(\cdot)\cdot {\rm Jm}(\cdot)=2$.
Thus, since ${\rm Jm}(\cdot)$ is continuous with respect to the Kadets metric, so is $g(\cdot)$.
\end{remark}


\end{document}